\let\cline\cmidrule
\newtheorem{theorem}{Theorem}%  meant for continuous numbers
\newtheorem{proposition}[theorem]{Proposition}% 
\newtheorem{corollary}{Corollary}[theorem]
\newtheorem{problem}{Problem}[section]
\newtheorem{lemma}[theorem]{Lemma}        
\newtheorem{definition}{Definition}%
\title{Higher-Order Root-Finding Algorithm and its Applications}
\author{Wei Guo Foo and Chik How Tan}
\date{Temasek Laboratories, National University of Singapore\\
	5A Engineering Drive 1, \#09-02, Singapore 117411, Singapore.
	$\{\text{fwg, tsltch}\}$@nus.edu.sg}
\begin{document}

%%=============================================================%%
%% GivenName	-> \fnm{Joergen W.}
%% Particle	-> \spfx{van der} -> surname prefix
%% FamilyName	-> \sur{Ploeg}
%% Suffix	-> \sfx{IV}
%% \author*[1,2]{\fnm{Joergen W.} \spfx{van der} \sur{Ploeg} 
%%  \sfx{IV}}\email{iauthor@gmail.com}
%%=============================================================%%

	\maketitle
%%==================================%%
%% Sample for unstructured abstract %%
%%==================================%%

\abstract{Root-finding method is an iterative process that constructs a sequence converging to a solution of an equation. Householder's method is a higher-order method that requires higher order derivatives of the reciprocal of a function and has disadvantages. Firstly, symbolic computations can take a long time, and numerical methods to differentiate a function can accumulate errors. Secondly, the convergence factor existing in the literature is a rough estimate. In this paper, we propose a higher-order root-finding method using only Taylor expansion of a function. It has lower computational complexity with explicit convergence factor, and can be used to numerically implement Householder's method. As an application, we apply the proposed method to compute pre-images of $q$-ary entropy functions, commonly seen in coding theory. Finally, we study  basins of attraction using the proposed method and compare them with other root-finding methods.\\

\noindent {\bf Keywords}: Higher-order root-finding methods, {\textit q}-ary entropy functions, Householder's method.\\
\noindent {\bf MSC Classification}: 65H05, 94-08}

%\keywords{Higher Order Root Finding, {\textit q}-ary entropy functions, Householder method.}

%%\pacs[JEL Classification]{D8, H51}
%%\pacs[MSC Classification]{35A01, 65L10, 65L12, 65L20, 65L70}
%\pacs[MSC Classification]{65H05, 94-08}

\section{Introduction}\label{sec1}
Finding a solution to an equation $f(x)=0$ may require numerical methods, especially when a closed-form formula can not be found. It requires setting up iterative schemes to construct sequences based on initial guesses converging to the nearest solution, and their convergence speeds vary according to different methods. The well-known ones are derivative-based methods such as Newton's and Halley's method, with respective convergence orders of 2 and 3. Householder's method \cite{Householder-1970,Pascal-Xavier-21,Breuer-Zwas-1984} (or in some places known as the K\"onig's method \cite{ABD-1997, Buff-Henriksen-2003, McNamee-2007}) is one of their generalisations which achieve higher orders of convergence. However, its use has a few limitations. Firstly, its implementation requires taking derivatives of the reciprocal of the function, which for high orders, can be symbolically difficult to implement. There are workarounds using finite difference methods to find numerical approximations to the derivative of a function but they accumulate errors and can lead to inaccurate results  \cite{GRS-2007, Krasny-2025}. Secondly, existing literature describes the convergence rate using the big O notation. Hence the method may fail to produce a converging sequence despite its implementation for high orders since the convergence factor may be very large, underscoring the importance of finding an accurate estimate of its value. This paper is therefore motivated by the following problem: 
\medskip
\begin{problem}
Given an equation $f(x)=0$ with $\alpha$ being a solution. Is there an effective and efficient numerical root-finding method that constructs a sequence of the form:
\begin{equation}\label{probeq}
x_{n+1}=x_{n}+P(x_{n}),
\end{equation}
with convergence to the actual solution in the following way for high orders of $k$:
\begin{equation}\label{probconv}
|x_{n+1}-\alpha|\leq C|x_{n}-\alpha|^{k+1},
\end{equation}
and with an accurate estimation of convergence factor $C$?
\end{problem}
\medskip
To address this problem, we propose a method which requires a set of equations consisting of Taylor series expansion of $f$ centered at the $n$-th iterated value $x_{n}$ evaluated at the nearest solution $\alpha$ treated as a symbolic variable that remained to be solved. Then we eliminate some Taylor coefficients by using linear combinations of equations similar to Gaussian elimination to obtain equation \eqref{probeq}. In addition, the elimination process  allows us to compute $C$ in equation \eqref{probconv}. The following theorem describes our main result of the paper. 
\medskip

\begin{theorem}\label{maintheorem}
	Let $f(x)$ be a $k+1$ continuously-differentiable function and $\alpha$ be one of the solutions to the equation $f(x)=0$. Define iteratively:
	\begin{equation}\label{mtAk}
		\begin{aligned}
			A_{0} &= 1,\quad A_{1}=f',\quad A_{2}=(f')^{2}-\frac{1}{2}f''f,\\
			A_{j+1} &= f'A_{j}-f\widehat{A}_{j},\quad \text{where}\quad  \widehat{A}_{j} = \sum_{l=2}^{j+1}\frac{1}{l!}(-f)^{l-2}f^{(l)}A_{j+1-l}.
		\end{aligned}
	\end{equation} 
	Let $x_{0}$ be the initial guess and consider the following sequence:
	\[
	x_{n+1} = x_{n} - \frac{f(x_{n})A_{k-1}(x_{n})}{A_{k}(x_{n})}.
	\]
	Then there exists an expression $C$ such that 
	\[
	\left|x_{n+1}-\alpha\right|\leq C\left|\alpha-x_{n}\right|^{k+1}.
	\]
	\end{theorem}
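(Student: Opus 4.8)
The plan is to fix an iterate $x_n$ in a small fixed neighbourhood of $\alpha$, write $g=x_n-\alpha$ for the signed error, and show that the numerator of
\[
x_{n+1}-\alpha \;=\; g-\frac{f(x_n)A_{k-1}(x_n)}{A_k(x_n)}\;=\;\frac{g\,A_k(x_n)-f(x_n)A_{k-1}(x_n)}{A_k(x_n)}
\]
vanishes to order $g^{k+1}$. Throughout, $f,f',\dots,f^{(k)}$ and the $A_j,\widehat A_j$ are understood to be evaluated at $x_n$, and $O(g^m)$ denotes a quantity bounded by a constant times $|g|^m$ uniformly for $x_n$ in that neighbourhood; this is legitimate because $f\in C^{k+1}$ makes each $f^{(l)}$ with $l\le k+1$ bounded there, each $A_j$ is a fixed polynomial in $f,f',\dots$ hence bounded, and $f(x_n)=O(g)$ since $f(\alpha)=0$. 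The recursion \eqref{mtAk} together with $f(\alpha)=0$ also gives $A_j(\alpha)=f'(\alpha)^j$, so assuming $\alpha$ is a simple root (so $f'(\alpha)\neq 0$) the denominator $A_k(x_n)$ is bounded away from $0$ near $\alpha$, and the desired bound $|x_{n+1}-\alpha|\le C|x_n-\alpha|^{k+1}$ reduces to the numerator estimate.

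The heart of the argument is the family of identities
\[
(\star_j):\qquad f\,A_{j-1}-g\,A_j \;=\; O(g^{j+1}),\qquad 1\le j\le k,
\]
which I would prove by strong induction on $j$. The base case $(\star_1)$ reads $f-g f'=O(g^2)$, which is exactly the order-$1$ Taylor expansion of $f$ about $x_n$ evaluated at $\alpha$ (Lagrange remainder, using $f\in C^2$). For the inductive step, substitute $A_j=f'A_{j-1}-f\widehat A_{j-1}$ into $gA_j$ and replace $g f'$ using the order-$j$ Taylor relation $\sum_{i=0}^{j}\frac{(-g)^i}{i!}f^{(i)}=O(g^{j+1})$, i.e.\ $g f'=f+\sum_{l=2}^{j}\frac{(-g)^l}{l!}f^{(l)}+O(g^{j+1})$. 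Using the explicit shape of $\widehat A_{j-1}=\sum_{l=2}^{j}\frac1{l!}(-f)^{l-2}f^{(l)}A_{j-l}$, the terms $fA_{j-1}$ cancel and one is left with
\[
f A_{j-1}-g A_j \;=\; \sum_{l=2}^{j}\frac{(-1)^l}{l!}\,f^{(l)}\,g\bigl(f^{\,l-1}A_{j-l}-g^{\,l-1}A_{j-1}\bigr)+O(g^{j+1}).
\]
Thus $(\star_j)$ follows once one knows $f^{\,l-1}A_{j-l}-g^{\,l-1}A_{j-1}=O(g^{j})$ for every $l$ with $2\le l\le j$.

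This last fact I would establish by an inner induction on $l$. The case $l=2$ is precisely $(\star_{j-1})$. For the step $l\to l+1$ (with $l+1\le j$), write $f^{\,l}A_{j-l-1}=f^{\,l-1}\cdot fA_{j-l-1}$, invoke $(\star_{j-l})$ to replace $fA_{j-l-1}$ by $g A_{j-l}+O(g^{\,j-l+1})$, use $f^{\,l-1}=O(g^{\,l-1})$ to absorb the error into $O(g^{j})$, and apply the previous case of the inner induction to $f^{\,l-1}A_{j-l}$. All the identities $(\star_m)$ invoked here have $m<j$, so there is no circularity. Finally $(\star_k)$ gives $g A_k-fA_{k-1}=O(g^{k+1})$, and dividing by $A_k(x_n)$ yields $|x_{n+1}-\alpha|\le C|x_n-\alpha|^{k+1}$; carrying the constants through the two nested inductions and through the Taylor remainders produces an explicit value of $C$ in terms of bounds on $f',\dots,f^{(k+1)}$ near $\alpha$, as demanded by \eqref{probconv}.

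The main obstacle is the bookkeeping: one must verify that after substituting the Taylor relation the algebra really does collapse to the single displayed sum plus an $O(g^{j+1})$ remainder — this is exactly where the precise combinatorial form of $\widehat A_j$ in \eqref{mtAk} is needed — and one must keep the $O(\cdot)$ constants uniform over one fixed neighbourhood of $\alpha$, shrinking it if necessary so the iteration stays inside it and converges. Everything else is routine Taylor estimation.
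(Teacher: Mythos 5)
Your proposal is correct, but it takes a genuinely different route from the paper. The paper \emph{derives} the recursion \eqref{mtAk} constructively: it sets up the system of Taylor expansions \eqref{Taylorpoly} and performs a Gaussian-elimination-style reduction (the $\mathrm{elim}$ operator), tracking coefficients until a single relation $0=(\cdots)fA_{k-1}+(\cdots)A_{k}(\alpha-x_{n})+D_{k+1}(\alpha-x_{n})^{k+1}$ survives, from which both the iteration and the explicit convergence factor $C_{k+1}=-r_{k+1,k+1}/r_{k+1,1}$ are read off. You instead take the recursion for $A_{j}$ as given and \emph{verify} by a nested strong induction the family $f A_{j-1}-gA_{j}=O(g^{j+1})$ (with $g=x_{n}-\alpha$), the case $j=k$ being the theorem; your inner induction on $l$, reducing $f^{\,l-1}A_{j-l}-g^{\,l-1}A_{j-1}=O(g^{j})$ to lower instances $(\star_{m})$ with $m<j$, is sound and the algebra you display does collapse as claimed once $gf'$ is replaced via the order-$j$ Taylor relation. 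What each approach buys: yours is a self-contained, fully checkable induction, arguably tighter than the paper's proof, which asserts rather than demonstrates that the coefficient-tracking yields \eqref{propmethod-eq}; the paper's elimination, on the other hand, explains where the $A_{j}$ come from and produces a closed-form convergence factor, whereas your $C$ is only an accumulation of constants from the nested inductions and Taylor remainders. Two small points worth noting: you correctly make explicit the simple-root hypothesis $f'(\alpha)\neq 0$ (so that $A_{k}(x_{n})\to f'(\alpha)^{k}\neq 0$ stays bounded away from zero), which the paper needs but leaves implicit when it divides by $r_{k+1,1}(x_{n})$; and your use of the Lagrange form of the order-$j$ remainder for each $j\le k$ is covered by the hypothesis $f\in C^{k+1}$, so no regularity is missing.
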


Recall that the Householder's method of order $k$ for the equation $f(x)=0$ with an initial guess $x_{0}$ to one of the roots $\alpha$ is a sequence given by the iteration:
\begin{equation}
	x_{n+1}= x_{n}+k\frac{(1/f)^{(k-1)}(x_{n})}{(1/f)^{(k)}(x_{n})},
\end{equation}
where $(1/f)^{(k)}$ denotes the $k$-th derivative of the reciprocal of the function $f$. As a consequence of Theorem \ref{maintheorem}, we show that our proposed method can be used to numerically implement Householder's method, which is our second main result.
\medskip
\begin{theorem}\label{intro-Theorem-Householder}
	Let $A_{k}$ be coefficients given by equation \eqref{mtAk}.  Then
	\[
	k\frac{(1/f)^{(k-1)}}{(1/f)^{(k)}} = -f\frac{A_{k-1}}{A_{k}}.
	\]
	In other words, the Householder's method can numerically be done by Theorem \ref{maintheorem}.
\end{theorem}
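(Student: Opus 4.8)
The plan is to reduce the identity to a single closed formula for the successive derivatives of the reciprocal $g := 1/f$, namely
\[
g^{(j)} = \frac{(-1)^{j}\, j!\, A_{j}}{f^{\,j+1}} \qquad (j\ge 0),
\]
valid on the open set where $f\neq 0$, and then to establish this formula by induction on $j$. Once it is available the theorem drops out by direct substitution: putting $j=k-1$ and $j=k$ and cancelling $f^{k}$ against $f^{k+1}$ and $(k-1)!$ against $k!$ gives
\[
k\,\frac{g^{(k-1)}}{g^{(k)}} = k\cdot\frac{(-1)^{k-1}(k-1)!\,A_{k-1}\,f^{k+1}}{(-1)^{k}\,k!\,A_{k}\,f^{k}} = -\,f\,\frac{A_{k-1}}{A_{k}} ,
\]
and since $-fA_{k-1}/A_{k}$ is exactly the increment used in the iteration of Theorem~\ref{maintheorem}, the concluding sentence of the statement follows as well. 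So everything hinges on the displayed formula for $g^{(j)}$, which I would isolate as a lemma.

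First I would settle the base case $j=0$: as $A_{0}=1$, the right-hand side is $1/f=g$; the cases $j=1,2$ can additionally be checked against the listed $A_{1}=f'$ and $A_{2}=(f')^{2}-\tfrac12 f''f$ to pin down the sign conventions, though this is not logically needed. For the inductive step I would assume the formula for all indices up to $k$ and differentiate the relation $f\cdot g=1$ exactly $k+1$ times by the Leibniz rule,
\[
\sum_{j=0}^{k+1}\binom{k+1}{j} f^{(j)}\, g^{(k+1-j)} = 0 .
\]
Only the $j=0$ summand contains $g^{(k+1)}$, appearing as $f\,g^{(k+1)}$, so I can isolate $g^{(k+1)}$ in terms of $g^{(0)},\dots,g^{(k)}$, each of which the induction hypothesis rewrites through $A_{0},\dots,A_{k}$. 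Substituting, multiplying through by $f^{\,k+2}$, and using $\binom{k+1}{j}(k+1-j)! = (k+1)!/j!$, the whole expression should collapse to
\[
A_{k+1} = f' A_{k} - f\sum_{l=2}^{k+1}\frac{1}{l!}(-f)^{\,l-2} f^{(l)} A_{k+1-l} ,
\]
which is precisely the defining recursion \eqref{mtAk} for $A_{k+1}$ (note that for $k=0,1$ the sum is empty or a single term, recovering $A_{1}=f'$ and $A_{2}=(f')^{2}-\tfrac12 f''f$). Hence $g^{(k+1)}=(-1)^{k+1}(k+1)!\,A_{k+1}/f^{\,k+2}$, which closes the induction.

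The hard part will not be any conceptual difficulty but the sign- and power-bookkeeping in that last collapse: one must verify $(-1)^{k+1}(-1)^{k+1-j}=(-1)^{j}$, that the $j=1$ term of the Leibniz sum produces exactly $f'A_{k}$, and that for $j\ge 2$ the factor $f^{\,j-1}$ splits as $f\cdot f^{\,j-2}$ so that $\tfrac{(-1)^{j}}{j!}f^{(j)}f^{\,j-1} = f\cdot\tfrac{1}{j!}(-f)^{\,j-2}f^{(j)}$, after which the re-indexing $j\mapsto l$ matches the sum defining $\widehat A_{k}$ term by term. This is routine but error-prone, so I would carry the signs explicitly throughout. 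One small point I would flag at the outset: all the manipulations take place on $\{f\neq 0\}$, where $1/f$ and its derivatives exist, so dividing by $f$ in the final step is legitimate, and only $f\in C^{k}$ is needed for the statement itself (the induction up to index $k$), even though pushing the induction one more step would cost one further derivative.
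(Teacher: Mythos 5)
Your proposal is correct, and it proves the same key lemma as the paper --- namely $(1/f)^{(j)}=(-1)^{j}j!\,A_{j}/f^{j+1}$ (the paper's Lemma~\ref{lem5}, stated there as $B_{j}=(-1)^{j}j!A_{j}$) --- followed by the identical substitution-and-cancellation step. The difference lies entirely in how the inductive step of that lemma is carried out, and it is a genuine one. The paper differentiates the relation $(1/f)^{(k)}=B_{k}/f^{k+1}$ once, obtains $B_{k+1}=fB_{k}'-(k+1)f'B_{k}$, and then invokes the separately proved differential identity $A_{k}'=(k+1)\widehat{A}_{k}$ (Proposition~\ref{prop2.0.1}) to convert $A_{k}'$ into $\widehat{A}_{k}$ and land on the recursion $A_{k+1}=f'A_{k}-f\widehat{A}_{k}$. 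You instead apply the Leibniz rule to $f\cdot(1/f)=1$ differentiated $k+1$ times, isolate $(1/f)^{(k+1)}$, and substitute the (strong) induction hypothesis for all lower-order derivatives; the bookkeeping you flag --- $\binom{k+1}{j}(k+1-j)!=(k+1)!/j!$, the sign collapse $(-1)^{k+1}(-1)^{k+1-j}=(-1)^{j}$, the $j=1$ term giving $f'A_{k}$, and the split $f^{j-1}=f\cdot f^{j-2}$ matching $\widehat{A}_{k}$ term by term --- all checks out. What your route buys is self-containedness: you never differentiate the $A_{j}$'s, so Proposition~\ref{prop2.0.1} (whose proof is the longest computation in Section~\ref{section2}) is not needed at all for this theorem. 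What the paper's route buys is a one-line inductive step once that proposition is in hand, plus the proposition itself as a structural fact about the $A_{k}$'s. Either argument is complete; yours trades a prerequisite for a slightly heavier, but entirely routine, Leibniz computation.
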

\medskip
Due to Theorem \ref{intro-Theorem-Householder}, we are able to obtain a more accurate description of the convergence factor for the Householder's method.
\medskip
\begin{corollary}\label{maintheorem-cor}
	Let $f(x)=0$ be an equation and $x_{0}$ be the first guess of the actual solution $\alpha$. Let $x_{n}'$ and $x_{n}''$ be sequences given  by
	\begin{equation}
		\begin{aligned}
			x_{n+1}' &= x_{n}' + k\frac{(1/f)^{(k-1)}(x_{n}')}{(1/f)^{(k)}(x_{n}')},\\
			x_{n+1}'' &= x_{n}'' -\frac{f(x_{n}'')A_{k-1}(x_{n}'')}{A_{k}(x_{n}'')}.
		\end{aligned}
	\end{equation}
	Let $C$ be such that 
	\[
	|x_{n+1}''-\alpha|\leq C(x_{n}'',\alpha)|x_{n}''-\alpha|^{k+1}.
	\]
	Then for the same $C$,
	\[
	|x_{n+1}'-\alpha|\leq C(x_{n}'',\alpha)|x_{n}'-\alpha|^{k+1}.
	\]
\end{corollary}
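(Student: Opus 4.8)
The plan is to recognise Corollary~\ref{maintheorem-cor} as an almost immediate consequence of Theorem~\ref{intro-Theorem-Householder}: that theorem identifies the Householder update term with the update term of the scheme in Theorem~\ref{maintheorem} \emph{as functions of the current iterate}, so the two recurrences are literally the same map, and every convergence estimate for one is verbatim an estimate for the other.

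Concretely, I would introduce $g(x) := x - \dfrac{f(x)A_{k-1}(x)}{A_{k}(x)}$, defined wherever $A_{k}(x)\neq 0$. On that set Theorem~\ref{intro-Theorem-Householder} gives
\[
x + k\frac{(1/f)^{(k-1)}(x)}{(1/f)^{(k)}(x)} \;=\; x - \frac{f(x)A_{k-1}(x)}{A_{k}(x)} \;=\; g(x),
\]
so, restricting attention to iterates at which the method of Theorem~\ref{maintheorem} is defined, both recurrences in the statement read $x_{n+1}=g(x_{n})$. Since both sequences are launched from the same initial guess $x_{0}$, a one-line induction---if $x_{n}'=x_{n}''$ then $x_{n+1}'=g(x_{n}')=g(x_{n}'')=x_{n+1}''$---shows $x_{n}'=x_{n}''$ for all $n$. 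Now apply the inequality of Theorem~\ref{maintheorem} to the sequence $x_{n}''$, namely $|x_{n+1}''-\alpha|\le C(x_{n}'',\alpha)\,|x_{n}''-\alpha|^{k+1}$, and substitute $x_{n}''=x_{n}'$ on the right-hand side (and $x_{n+1}''=x_{n+1}'$ on the left) to obtain $|x_{n+1}'-\alpha|\le C(x_{n}'',\alpha)\,|x_{n}'-\alpha|^{k+1}$, which is the assertion.

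There is essentially no obstacle here beyond bookkeeping: the substantive work was already carried out in Theorems~\ref{maintheorem} and~\ref{intro-Theorem-Householder}, and the only point to watch is well-definedness, i.e. that the Householder iteration and the $A_{k}$-iteration are defined at the same iterates. This is ensured by the explicit correspondence between $(1/f)^{(k)}$ and $A_{k}$ used to prove Theorem~\ref{intro-Theorem-Householder}, from which the vanishing loci of $(1/f)^{(k)}$ and of $A_{k}$ coincide away from the zeros of $f$; at a zero of $f$ the iteration has already reached a solution. Stating the result with the factor written as $C(x_{n}'',\alpha)$ is precisely meant to emphasise that the sharp, explicit convergence factor produced for the proposed method is simultaneously a valid convergence factor for the classical Householder iteration.
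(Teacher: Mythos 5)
Your proposal is correct and matches the paper's argument: the paper's proof is the one-liner that Theorem~\ref{Theorem-Householder} forces $x_{n}'=x_{n}''$ for all $n$ (given the common initial guess), after which the estimate transfers verbatim. Your additional remarks on well-definedness and the induction are just an expanded version of the same idea.
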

\medskip
%Our algorithm (see Algorithm \ref{RFA}) to implement the proposed root-finding method will be based on equation \eqref{mtAk} in Theorem \ref{maintheorem}. If $k$ is the order of convergence, then an estimation shows that it takes about $O(k^{3})$ steps to compute the sequence $\{A_{0},\cdots,A_{k}\}$. In contrast, while very little is said in the existing literature about time complexity of Householder's method, it can grow at least exponentially with respect to $k$ if the derivatives of $\frac{1}{f}$ were to be computed symbolically. Therefore, Theorem \ref{intro-Theorem-Householder} says that our proposed root-finding method can implement the Householder's method numerically in polynomial time. Thus Theorem \ref{maintheorem} offers a root-finding method with lower complexity, and is able to 

There are several advantages of our proposed root-finding method in Theorem \ref{maintheorem}. Firstly, it has lower computational complexity. If $k$ is the order of convergence, an estimation shows that it takes about $O(k^{3})$ steps to compute the sequence $\{A_{0},\cdots,A_{k}\}$. In contrast, the time complexity of Householder's method can grow exponentially with respect to $k$ if  $\frac{d^{k}}{dx^{k}}\frac{1}{f}$ were to be symbolically calculated. Secondly, it allows Householder's method to be numerically implemented with lower computational complexity by avoiding symbolic calculations. Lastly, it gives us better clarity of the time complexity and a more explicit convergence factor of the Householder's method.

As a demonstration of our proposed root-finding method, we apply Theorem \ref{maintheorem} to  find the pre-images of the $q$-ary entropy function, which plays an important role in information theory and cryptography. It describes how much choice is involved in the selection of events and how uncertain we are of the outcome. It was introduced by Shannon \cite{Shannon1948, MacKay-2003} in 1948, and is mathematically written as:
\[
H = -\sum_{i=1}^{n}p_{i}\log_{b}(p_{i}),
\]
where $p_{i}$'s are the probabilities of occurences of events so that their sum is $1$. The case $n=b=2$ is the well-known binary entropy function:
\begin{equation}\label{H2}
H_{2}(x)=-x\log_{2}x - (1-x)\log_{2}(1-x).
\end{equation}
More generally when dealing with finite fields with $q$ elements, one has the $q$-ary entropy function:
\medskip
\begin{definition}\label{Hq}
	For any $q\geq 2$, the $q$-ary entropy function $H_{q}:[0,1]\longrightarrow \mathbb{R}$ is defined as 
	\[
	H_{q}(x) :=x\log_{q}(q-1)-x\log_{q}x-(1-x)\log_{q}(1-x).
	\]
\end{definition}

Recall that a linear code is a vector subspace of $\mathbb{F}_{q}^{n}$, and suppose its dimension grows proportionally to $n$ at a certain rate $0<R<1$. A successful decoding of a syndrome depends on finding the minimal distance of the linear code, which is an NP-complete problem \cite{Vardy-2002} especially for large $n$. Nonetheless, an estimation of the minimal distance of a general random linear code is possible by the Gilbert-Varshamov distance \cite{Gilbert-1952, Varshamov-1957}, denoted by $D_{\text{GV}}(R)$, which  is the unique solution $x\in [0,\frac{q-1}{q}]$ to
\begin{equation}\label{intro-eq1}
H_{q}(x)=1-R.
\end{equation}
A classical result shows that in almost all of the linear codes, the distance between any two elements is at least $\lfloor D_{\text{GR}}(R)n\rfloor$. Thus computing the minimum distance of a linear code requires finding  solution to equation \eqref{intro-eq1}. To date, other than the obvious ones $1-R\in \{0,1\}$, there is no known closed-form solution.  Hence the second problem we aim to address in this paper is the following:
\medskip
\begin{problem}\label{prob2}
For any $y\in [0,1]$, find the numerical solution to $H_{q}(x)=y$ 
for $x\in [0,\frac{q-1}{q}]$.
\end{problem}
\medskip
A common challenge to implementing root-finding methods is the choice of first guess $x_{0}$ of the solution of the equation $f(x)=0$. In the context of Problem \ref{prob2} , this can be overcome by replacing the $q$-ary entropy function with  simpler approximating functions, and solving the new equations instead. Intuitively, due to the closeness between these functions, the first guess should not be too far away from the actual solutions.  We will study various ways to approximate the $q$-ary entropy function, and compare the performances of root-finding methods with these guesses. Numerical results show that our proposed root-finding method runs faster than the classical Householder's method.

As a third application of Theorem \ref{maintheorem}, we apply the root-finding method to complex equations with complex solutions and examine their basins of attraction, commonly studied in dynamical systems. It describes the set of starting points giving rise to sequences which converge to the same solution. As a consequence of Theorem \ref{intro-Theorem-Householder}, we show that both Theorem \ref{maintheorem} and Householder's method have the same basins of attractions.

This paper is organised as follows. Section 2 describes the root-finding method and provides a proof to Theorem \ref{maintheorem}. Section 3 discusses how Theorem \ref{maintheorem} can be used to numerically implement Householder's method. Section 4 applies the discussion to find the pre-images of the $q$-ary entropy functions.  Finally in Section 5, we show that both Theorem \ref{maintheorem} and Householder's method have the same basins of attraction, and provide some illustrations.

\section{Proposed Higher Order Root Finding Scheme}\label{section2}

We will solve for $x$ in $f(x)=y$ for a given fixed $y$. By rewriting the equation as $f(x)-y=0$, and reassigning the left-hand side as our new function $f(x)$, we may without loss of generality reduce the problem to finding the solution for the equation $f(x)=0$. Our starting point is to expand $f(x)$ around our $n$-th iterated estimated root $x_{n}$ in terms of Taylor series up to order $k+1$:
\begin{equation*}
\begin{aligned}
f(x) &= f(x_{n})+f'(x_{n})(x-x_{n})+\cdots  + \frac{1}{k!}f^{(k)}(x_{n})(x-x_{n})^{k}+\frac{1}{(k+1)!}f^{(k+1)}(\xi_{n})(x-x_{n})^{k+1},
\end{aligned}
\end{equation*}
where by Taylor's theorem, $\xi_{n}$ lies between $x$ and $x_{n}$. Thus $\xi_{n}$ depends on $x$. If $\alpha$ is a solution to $f(x)=0$, then we have the fundamental equation:
\begin{equation*}
\begin{aligned}
0 &= f(\alpha)\approx \sum_{j=0}^{k}\frac{1}{j!}f^{(j)}(x_{n})(\alpha-x_{n})^{j}+\frac{1}{(k+1)!}f^{(k+1)}(\xi_{n})(\alpha-x_{n})^{k+1}. %\\
%&\approx f(x_{n})+f'(x_{n})(\alpha-x_{n})+\cdots\\
%&\hspace{0.9cm} \cdots + \frac{1}{k!}f^{(k)}(x_{n})(\alpha-x_{n})^{k}+\frac{1}{(k+1)!}f^{(k+1)}(\xi_{n})(\alpha-x_{n})^{k+1}.
\end{aligned}
\end{equation*}

\subsection{The Root-Finding Method and Proof of Theorem \ref{maintheorem}}\label{algo-subsect}

Our algorithm consists of setting up a system of $k$ equations consisting of Taylor's expansion of $f$ up to a fixed order in the following manner:

\begin{equation}\label{Taylorpoly}
	\begin{aligned}
		0 &= f_{2}(\alpha) := f(x_{n})+f'(x_{n})(\alpha-x_{n})+\frac{1}{2!}f''(\xi_{2})(\alpha-x_{n})^{2},\\
		&\vdots\\
		0 &= f_{k+1}(\alpha) := f(x_{n})+f'(x_{n})(\alpha-x_{n})+\cdots+\frac{1}{k!}f^{(k+1)}(\xi_{k+1})(\alpha-x_{n})^{k+1}.
	\end{aligned}
\end{equation}
% For the rest of the paper, we write:
%\[
%  R_{k}:=\frac{1}{k!}f^{(k)}(\xi_{k}).
%\]

\subsubsection{Outline}\label{outline}
 We first outline the idea for the proof of Theorem \ref{maintheorem}. Consider:
\begin{equation}\label{Taylorpoly2}
	\begin{aligned}
	 0 &= F_{j+2,0}^{j} + F_{j+2,1}^{j}\cdot (\alpha-x_{n}) + R_{j+2,j+2}^{j}\cdot (\alpha-x_{n})^{j+2},\\
	 %0 &= F_{3+j,0}^{j} + F_{3+j,1}^{j}\cdot (\alpha-x_{n}) + F_{3+j,2}^{j}\cdot (\alpha-x_{n})^{2} + R_{3+j,3+j}^{j}\cdot (\alpha-x_{n})^{3+j}\\
	 &\vdots\\
	 0 &= F_{l,0}^{j} + F_{l,1}^{j}\cdot (\alpha-x_{n}) + F_{l,2}^{j}\cdot (\alpha-x_{n})^{2} + \cdots + F_{l, l-j-1}^{j}\cdot(\alpha-x_{n})^{l-j-1}\\
	 &\hspace{8cm} +R_{l,l}^{j}\cdot (\alpha-x_{n})^{l},\\
	 &\vdots\\
	 0 &= F_{k+1,0}^{j} + F_{k+1,1}^{j}\cdot (\alpha-x_{n}) + F_{k+1,2}^{j}\cdot (\alpha-x_{n})^{2} +  \cdots + F_{k+1,k-j}^{j}\cdot (\alpha-x_{n})^{k-j}\\
	 &\hspace{8cm} +R_{k+1,k+1}^{j}\cdot (\alpha-x_{n})^{k+1},
	\end{aligned}
\end{equation}
where we let $F_{l,r}^{j}$  denote the coefficients of $(\alpha-x_{n})^{r}$ at the $j$-th step. The case $j=0$ corresponds to equation \eqref{Taylorpoly}.  At the $j$-th step, we use the first equation to eliminate the term containing $(\alpha-x_{n})^{l-j-1}$ in the $l$-th line:
\begin{equation}\label{steps}
	\begin{aligned}
		0 &= -F_{l,l-j-1}^{j}\cdot (\alpha-x_{n})^{l-j-2}\left(F_{j+2,0}^{j} + F_{j+2,1}^{j}\cdot (\alpha-x_{n}) + R_{j+2,j+2}^{j}\cdot (\alpha-x_{n})^{j+2}\right)\\
		&
		+F_{j+2,1}^{j}\cdot \bigg(F_{l,0}^{j} + F_{l,1}^{j}\cdot (\alpha-x_{n}) + F_{l,2}^{j}\cdot (\alpha-x_{n})^{2} + \cdots + F_{l, l-j-1}^{j}\cdot(\alpha-x_{n})^{l-j-1}\\
		&\hspace{8cm} +R_{l,l}^{j}\cdot (\alpha-x_{n})^{l}\bigg)\\
		%&:= F_{\ell+j,0}^{j+1} + F_{\ell+j,1}^{j+1}\cdot (\alpha-x_{n}) + \cdots + F_{\ell+j,\ell-2}^{j+1}\cdot (\alpha-x_{n})^{\ell-2}+
		%R_{\ell+j,\ell+j}^{j+1}\cdot (\alpha-x_{n})^{\ell+j}\\
		&:= F_{l,0}^{j+1} + F_{l,1}^{j+1}\cdot (\alpha-x_{n}) + \cdots + F_{l,l-j-2}^{j+1}\cdot (\alpha-x_{n})^{l-j-2}+
		R_{l,l}^{j+1}\cdot (\alpha-x_{n})^{l},
	\end{aligned}
\end{equation}
which gives $F_{l,r}^{j+1}$ coefficients of $(\alpha-x_{n})^{r}$ at the $(j+1)$-th step. We then remove the first line, and repeat the process. Continuing this way, we will obtain equation of the form:
\[
0 = F_{k+1,0}^{k-1} + F_{k+1,1}^{k-1}(\alpha-x_{n})+R_{k+1,k+1}^{k-1}\cdot (\alpha-x_{n})^{k+1}.
\]

 For the rest of the subsection, we will treat the $f_{i}(\alpha)$'s in equation \eqref{Taylorpoly} as functions in the variable $\alpha$. Having set up the context, we will prove Theorem \ref{maintheorem}.

\subsubsection{Proof of Theorem 1}
\begin{proof}[Proof of Theorem \ref{maintheorem}]
Given a sequence of functions in variable $\alpha$:
\[
  \overrightarrow{P_{l}} := (P_{l}(\alpha),\dots,P_{k+1}(\alpha)),
\]
we define the operator $\text{elim}$ that sends each element of $\overrightarrow{P_{l}}$ to 
\begin{align*}
  P_{l} &\longmapsto P_{l},\\
  P_{j} &\longmapsto \left(P_{j}(\alpha)P_{l}'(\alpha)\big|_{\alpha=x_{n}}-\frac{1}{(j-l+1)!}P_{l}(\alpha)P_{j}^{(j-l+1)}(\alpha)\big|_{\alpha=x_{n}}(\alpha-x_{n})^{j-l}\right),
\end{align*}
where the differentiation is done with respect to the variable $\alpha$. With $f_{2},\, \cdots,\ f_{k+1}$ given in equation \eqref{Taylorpoly}, successively we write
\begin{align*}
  \overrightarrow{r_{2}} := (f_{2}(\alpha),\cdots,f_{k+1}(\alpha)),\qquad
  \overrightarrow{r_{j+1}} := \pi_{j}\big(\text{elim}(\overrightarrow{r_{j}})\big),
\end{align*}
where $\pi_{i}:(x_{i},x_{i+1},\dots,x_{k+1})\longmapsto (x_{i+1},\cdots,x_{k+1})$ is the projection of a vector onto the last $k-i+1$ elements. The final  $\overrightarrow{r_{k+1}}$ therefore consists only of one element, which is a polynomial of the form 
\begin{eqnarray}\label{rf-expr}
	\begin{aligned}
  r_{k+1,0}(x_{n})+r_{k+1,1}(x_{n})(\alpha-x_{n})+r_{k+1,k+1}(x_{n},\xi_{i})(\alpha-x_{n})^{k+1}.
  \end{aligned}
\end{eqnarray}
Equating this to zero, the equation can be rewritten as 
\[
  \alpha-\left(x_{n}-\frac{r_{k+1,0}(x_{n})}{r_{k+1,1}(x_{n})}\right) = -\frac{r_{k+1,k+1}(x_{n},\xi_{i})}{r_{k+1,1}(x_{n})}(\alpha-x_{n})^{k+1}=: C_{k+1}\cdot (\alpha-x_{n})^{k+1}.
\]
If we recursively let
\[
  x_{n+1} := x_{n}-\frac{r_{k+1,0}(x_{n})}{r_{k+1,1}(x_{n})},
\]
then the sequence $(x_{n})$ satisfies the convergence speed
\begin{equation}\label{ocr}
  \alpha-x_{n+1}=-\frac{r_{k+1,k+1}(x_{n},\xi_{i})}{r_{k+1,1}(x_{n})}(\alpha-x_{n})^{k+1}.
\end{equation}

Following the procedure described above, we can track the changes of the coefficients at each step such as in the previous subsection \ref{outline}, and finally obtain explicit expressions of $r_{k+1,0}(x_{n})$ and $r_{k+1,1}(x_{n})$ below in equation \eqref{propmethod-eq}:
\begin{equation} \label{propmethod-eq}
   \begin{aligned} 
    %0 &= f + f'(\alpha-x_{n}) + \frac{1}{2}f'' (\alpha-x_{n}) ^{2},\\
    %0 &= ff' + A_{2}(\alpha-x_{n})  + D_{3}(\alpha-x_{n}) ^{3},\\
    %&\vdots\\
    0 &= f\cdot\left( (f')^{2^{k-3}}(A_{2})^{2^{k-4}}\cdots (A_{r})^{2^{k-r-2}}\cdots A_{k-2} \right)A_{k-1}\\
    &\hspace{0.5cm} +(f')^{2^{k-3}}(A_{2})^{2^{k-4}}\cdots (A_{r})^{2^{k-r-2}}\cdots A_{k-2}A_{k}\cdot (\alpha-x_{n}) + D_{k+1}\cdot(\alpha-x_{n}) ^{k+1},
  \end{aligned}
\end{equation}
where the coefficients $A_{k}$ are given by the recurrence relations:
\begin{equation}\label{Ak}
\begin{aligned}
  A_{0} &= 1,\quad A_{1}=f',\quad A_{2}=(f')^{2}-\frac{1}{2}f''f,\\
  A_{j+1} &= f'A_{j}-f\widehat{A}_{j},\quad \text{where}\quad  \widehat{A}_{j} = \sum_{l=2}^{j+1}\frac{1}{l!}(-f)^{l-2}f^{(l)}A_{j+1-l}.
\end{aligned}
\end{equation} 
The root-finding algorithm is therefore:
\begin{equation}\label{Ak2}
  x_{n+1}=x_{n}-\frac{fA_{k-1}}{A_{k}}.
\end{equation}
which is the result in Theorem \ref{maintheorem}. Combined with equation \eqref{ocr}, we have shown that equation \eqref{Ak2} has $(k+1)$-th order of convergence. Moreover, we can also track the expression of $D_{k+1}$ in the similar way as we have done for $A_{k}$ and obtain an explicit convergence factor $C_{k+1}$. This concludes the proof of Theorem \ref{maintheorem}. 
\end{proof}

%In numerical implementation of this method, in equation \eqref{Taylorpoly}, we will replace $\xi_{i}$ by $x_{i}$. This is justified given that $\xi_{i}$ lies between $\alpha$ and $x_{i}$, and $x_{i}$ is close to $\alpha$. Also, the Taylor coefficients $f^{(j)}(x_{n})$ will be real numbers, being the value of the derivatives of $f$ at $x_{n}$. This avoids making symbolic calculations.

Algorithm \ref{RFA} shows the pseudo-code for its numerical implementation, and we discuss its time complexity as a function of the order of convergence $k$. For simplicity we assume $\overrightarrow{\bf f}$ is known, and we count the number of arithmetic operations to obtain $A_{k+1}$. For each $s$, it takes about $s^2+6s+2$ steps to compute $A_{s+1}$. Since obtaining $A_{k+1}$ requires the previous values $A_{3}$, $\cdots$, $A_{k}$, computing the final term $A_{k+1}$ therefore needs approximately $\sum_{s=2}^{k-1}(s^{2}+6s+2)=\frac{1}{3}k^{3}+\frac{5}{2}k^{2}+\frac{11}{6}k-12=O(k^{3})$ steps. However, the overall time complexity also depends on computing $\overrightarrow{\bf f}$, and therefore on the choice of function $f$. 

On the other hand, Householder's method relies heavily on computing the derivatives of $\frac{1}{f}$. If we see $f$ as a symbolic function, then the number of terms in the expanded numerator of $\frac{d^{k}}{dx^{k}}\frac{1}{f}$ grows at least $e^{0.24k}$ for $2\leq k\leq 50$. In this scenario, performing numerical evaluation can take at least exponential time, and hence so is Householder's method.

\begin{algorithm}[t!]
	 \caption{Root-Finding algorithm}\label{RFA}
	\begin{algorithmic}[1]
	\Require $x_{0}\in\mathbb{R}$, $k\in \mathbb{N}$, $k\geq3$.
	\Ensure The function $A_{k}$ in Theorem \ref{maintheorem}.
		\Procedure{A}{$f(x)$, $x_{0}$, $k$}
		\State $\overrightarrow{\bf f} \gets \left(f|_{x=x_{0}},\ \cdots,\ f^{(k)}|_{x=x_{0}}\right)$
		\State $A_{0} \gets 1$
		\State $A_{1} \gets \overrightarrow{\bf f}[1]$
		\State $A_{2} \gets \overrightarrow{\bf f}[1]^{2}-0.5\overrightarrow{\bf f}[2]\overrightarrow{\bf f}[0]$
		\State $\overrightarrow{\bf v} \gets (A_{0}, A_{1}, A_{2})$
		\For{$2\leq s\leq k-1$}
		\State $\widehat{A}_{s}\gets  \sum_{l=2}^{s+1}\frac{1}{l!}(-\overrightarrow{\bf f}[0])^{l-2}\overrightarrow{\bf f}[l]A_{s+1-l}$
		\State $A_{s+1} \gets \overrightarrow{\bf f}[1]\overrightarrow{\bf v}[s]-\overrightarrow{\bf f}[0]\widehat{A}_{s}$
		\State $\overrightarrow{\bf v} \gets (A_{0},A_{1},A_{2},\cdots, A_{s+1})$
		\EndFor
		\State \Return $\overrightarrow{\bf v}[k]$
		\EndProcedure
	\end{algorithmic}
	\begin{algorithmic}[1]
		\Require Initial guess $x_{0}$, number of iterations $N$, order of convergence $k\geq 3$.
		\Ensure $N$-th element of sequence $x_{N}$.
		\State $x_{\text{old}} \gets x_{0}$
		\For{$0\leq j\leq N-1$}
		\State $x_{\text{new}}\gets x_{\text{old}}-\frac{f(x_{\text{old}})A(f,x_{\text{old}},k-1)}{A(f,x_{\text{old}},k)}$
		\State $x_{\text{old}} \gets x_{\text{new}}$
		\EndFor
	\end{algorithmic}
\end{algorithm}

%We apply the algorithm to the equation $\sin(x)=0$ around one of its roots $\alpha=\pi$ using the initial guess $x_{0}=3.14$ for 2 iterations, with orders of convergence ranging between $100$ to $500$. The computations are done with SageMath with 50000 bits of precision, which allows computations up to roughly $\log_{10}(2^{50000}-1)\approx 15051.5$ decimal places. Table \ref{tablepi} shows the difference between $x_{2}$ and $\pi$, and the estimated time it takes for the computations to be completed. 

%\begin{table}[h!]
%\begin{tabular}{|c|c|c|}
%	\hline
%	Order of Convergence & $|x_{2}-\pi|$ & CPU times \\
%	\hline
%	$100$ & $4.00\times 10^{-1614}$ & 15.2s \\
%	\hline
%	$200$ & $4.18\times 10^{-3213}$ & 1min 1s \\
%	\hline
%	$300$ & $4.38\times 10^{-4812}$ & 2min 17s \\
%	\hline
%	$400$ & $4.59\times 10^{-6411}$ & 4min 10s \\
%	\hline
%	$500$ & $4.80\times 10^{-8010}$ & 6min 44s \\
%	\hline
%\end{tabular}
%\caption{Comparison between results of root finding methods of various orders of convergence and the actual root $\pi$.}\label{tablepi}
%\end{table}
There is a differential equation that relates $A_{k}$ with $\widehat{A}_{k}$. 
\medskip
\begin{proposition}\label{prop2.0.1}
  For $k\geq 2$, we have $A_{k}' = (k+1)\widehat{A}_{k}$.
\end{proposition}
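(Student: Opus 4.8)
The plan is to route the proof through a closed form for $A_{k}$. Writing $g := 1/f$ (so that $g^{(k)} = (1/f)^{(k)}$, matching the notation of Theorem~\ref{intro-Theorem-Householder}), I would first establish, for $f$ of class $C^{k+1}$, the identity
\[
A_{k} \;=\; \frac{(-1)^{k}}{k!}\,f^{k+1}\,g^{(k)},
\]
valid wherever $f \neq 0$ and hence everywhere, since after cancellation both sides are polynomial in $f, f', \dots, f^{(k)}$. The proposition would then drop out by differentiating this formula.

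I would prove the closed form by strong induction on $k$. The cases $k \le 2$ are a direct check against $A_{0} = 1$, $A_{1} = f'$ and $A_{2} = (f')^{2} - \tfrac12 f'' f$ using $g = 1/f$. For the inductive step, assume the formula for all indices $\le k$ and substitute it into $\widehat{A}_{k} = \sum_{l=2}^{k+1}\frac{1}{l!}(-f)^{l-2} f^{(l)} A_{k+1-l}$; the powers of $f$ and the signs collapse to a common factor, leaving
\[
\widehat{A}_{k} \;=\; \frac{(-1)^{k-1} f^{k}}{(k+1)!}\sum_{l=2}^{k+1}\binom{k+1}{l} f^{(l)} g^{(k+1-l)}.
\]
The key point is that completing the sum to run over $l = 0, \dots, k+1$ produces the Leibniz expansion of $(fg)^{(k+1)}$, which vanishes since $fg \equiv 1$; hence the displayed tail equals $-f g^{(k+1)} - (k+1) f' g^{(k)}$. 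Inserting this and simplifying $f' A_{k} - f\widehat{A}_{k}$ gives $\frac{(-1)^{k+1}}{(k+1)!} f^{k+2} g^{(k+1)} = A_{k+1}$, closing the induction; the same computation also records the closed form
\[
\widehat{A}_{k} \;=\; \frac{(-1)^{k}}{(k+1)!}\Bigl(f^{k+1} g^{(k+1)} + (k+1) f^{k} f' g^{(k)}\Bigr).
\]

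Granting both closed forms, the proposition is a one-line verification: the product rule gives
\[
A_{k}' \;=\; \frac{(-1)^{k}}{k!}\Bigl((k+1) f^{k} f' g^{(k)} + f^{k+1} g^{(k+1)}\Bigr),
\]
and comparison with the formula for $\widehat{A}_{k}$ shows this is exactly $(k+1)\widehat{A}_{k}$.

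The step I expect to be the main obstacle is the bookkeeping inside the inductive step for the closed form — carrying the sign $(-1)^{l-2}$ through, recognising that $\frac{1}{l!(k+1-l)!} = \frac{1}{(k+1)!}\binom{k+1}{l}$, and performing the index shift that exposes the incomplete Leibniz sum — after which everything is mechanical. A self-contained alternative that sidesteps the closed form is a direct strong induction on the claim itself: differentiate $A_{k+1} = f' A_{k} - f\widehat{A}_{k}$, use $A_{k}' = (k+1)\widehat{A}_{k}$ from the induction hypothesis to eliminate $A_{k}'$, then expand $\widehat{A}_{k}'$ term by term, invoking the hypothesis on $A_{j}'$ for $2 \le j \le k-1$ together with $A_{1}' = f''$ and $A_{0}' = 0$, and finally re-index to recover $(k+2)\widehat{A}_{k+1}$; along that route the difficulty migrates to matching the differentiated, re-indexed sum against $\widehat{A}_{k+1}$ and to handling its boundary terms.
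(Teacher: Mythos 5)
Your proposal is correct, but it takes a genuinely different route from the paper. The paper proves $A_k'=(k+1)\widehat{A}_k$ by direct strong induction on the recurrence: it differentiates $A_{k+1}=f'A_k-f\widehat{A}_k$, expands $\widehat{A}_k'$ term by term using the inductive hypothesis on the lower-order $A_j'$, and then re-indexes and recombines the resulting sums --- i.e.\ precisely the ``self-contained alternative'' you sketch in your last sentences, including the bookkeeping difficulties you anticipate there. Your primary route instead establishes the closed form $A_k=\frac{(-1)^k}{k!}f^{k+1}(1/f)^{(k)}$ first, via the vanishing of the Leibniz expansion of $(f\cdot\frac1f)^{(k+1)}$, and reads off the proposition by differentiating. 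Note that this closed form is exactly the paper's Lemma~\ref{lem5} ($B_k=(-1)^k k!A_k$), which the paper derives \emph{from} Proposition~\ref{prop2.0.1}; you reverse that logical order, and there is no circularity because your induction for the closed form never invokes the proposition --- I checked the sign and binomial bookkeeping ($\frac{1}{l!(k+1-l)!}=\frac{1}{(k+1)!}\binom{k+1}{l}$, the tail $-fg^{(k+1)}-(k+1)f'g^{(k)}$, and the cancellation of the $f^{k+1}f'g^{(k)}$ terms in $f'A_k-f\widehat{A}_k$) and it closes correctly, with the base cases $k\le 2$ checking out. What your approach buys is a substantially shorter and more conceptual proof that delivers Lemma~\ref{lem5} and hence Theorem~\ref{Theorem-Householder} in the same stroke; what it costs is the need to work on $\{f\neq 0\}$ and then argue (as you do, a little briskly) that the identity between differential polynomials in $f,f',\dots,f^{(k+1)}$ extends across the zero set of $f$ --- a routine continuity/genericity point, but one the paper's derivative-free induction never has to confront.
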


\begin{proof}
  We will prove this result by induction on $k$. By the formula above, we see that 
  \begin{align*}
    A_{2} &= (f')^{2}-\frac{1}{2}f''f,\\
    \widehat{A}_{2} &= \frac{1}{2}f''f'-\frac{1}{6}ff^{(3)}.
  \end{align*}
  Thus we can easily verify that $A_{2}'=3\widehat{A}_{2}$, and the lemma holds for $k=2$. Assuming that the lemma holds for $A_{k}'=(k+1)\widehat{A}_{k}$, to show that $A_{k+1}'=(k+2)\widehat{A}_{k+1}$,
  \begin{align*}
    A_{k+1}' &= \big(f'A_{k}-f\widehat{A}_{k}\big)'=
    f''A_{k}+f'A_{k}'-f'\widehat{A}_{k}-f\widehat{A}_{k}'\\
    &= 
    f''A_{k}+(k+1)f'\widehat{A}_{k}-f'\widehat{A}_{k}-f\widehat{A}_{k}'\\
    &=
    f''A_{k}+kf'\widehat{A}_{k}-f\widehat{A}_{k}',
  \end{align*}
  where we use the induction hypothesis in the second line. It remains to compute $\widehat{A}_{k}'$. Using the equation for $\widehat{A}_{k}$, and differentiating it with respect to $x_{i}$, we see that 
  \begin{align*}
    \widehat{A}_{k}' &= \left(\sum_{l=2}^{k+1}\frac{1}{l!}(-f)^{l-2}f^{(l)}A_{k-l+1}\right)'\\
    &=
    \sum_{l=3}^{k+1}\frac{l-2}{l!}(-f)^{l-3}(-f')f^{(l)}A_{k-l+1} +\sum_{l=2}^{k+1}\frac{1}{l!}(-f)^{l-2}f^{(l+1)}A_{k-l+1}\\
&\hspace{0.5cm}+\sum_{l=2}^{k+1}\frac{1}{l!}(-f)^{l-2}f^{(l)}A_{k-l+1}'\\
    &=
    \sum_{l=3}^{k+1}\frac{l-2}{l!}(-f)^{l-3}(-f')f^{(l)}A_{k-l+1}+\sum_{l=2}^{k+1}\frac{1}{l!}(-f)^{l-2}f^{(l+1)}A_{k-l+1}\\
&\hspace{0.5cm}+\sum_{l=2}^{k+1}\frac{k-l+2}{l!}(-f)^{l-2}f^{(l)}\widehat{A}_{k-l+1},\\
  \end{align*}
  where again we use the induction hypothesis. Substituting this into the expression for $A_{k+1}'$, and also using the expression for $\widehat{A}_{k}$, we see that 
  \begin{align*}
    A_{k+1}' &= f''A_{k}+kf'\widehat{A}_{k}-f\widehat{A}_{k}'\\
    &=
    f''A_{k}+kf'\sum_{l=2}^{k+1}\frac{1}{l!}(-f)^{l-2}f^{(l)}A_{k-l+1}+\sum_{l=3}^{k+1}\frac{l-2}{l!}(-f)^{l-2}(-f')f^{(l)}A_{k-l+1}\\
    &\hspace{0.5cm}
    +\sum_{l=2}^{k+1}\frac{1}{l!}(-f)^{l-1}f^{(l+1)}A_{k-l+1}
    +\sum_{l=2}^{k+1}\frac{k-l+2}{l!}(-f)^{l-1}f^{(l)}\widehat{A}_{k-l+1}\\
    &=f''A_{k}+(\frac{1}{2}kf'f''-\frac{1}{2}ff''')A_{k-1}-\frac{1}{2}kff''\widehat{A}_{k-1}\\
    &\hspace{0.5cm}
    +\sum_{l=3}^{k+1}\frac{k-l+2}{l!}(-f)^{l-2}f^{(l)}f'A_{k-l+1}+\sum_{l=3}^{k+1}\frac{1}{l!}(-f)^{l-1}f^{(l+1)}A_{k-l+1}\\
    &\hspace{0.5cm}
    +\sum_{l=3}^{k+1}\frac{k-l+2}{l!}(-f)^{l-1}f^{(l)}\widehat{A}_{k-l+1}.
  \end{align*}
  Since
  \[
    \frac{k}{2}f'f''A_{k-1}-\frac{k}{2}ff''\widehat{A}_{k-1}=\frac{k}{2}f''A_{k},
  \]
  and
  \begin{align*}
    & \sum_{l=3}^{k+1}\frac{k-l+2}{l!}(-f)^{l-2}f^{(l)}f'A_{k-l+1}+\sum_{l=3}^{k+1}\frac{k-l+2}{l!}(-f)^{l-2}f^{(l)}(-f)\widehat{A}_{k-l+1}\\
    &\hspace{0.5cm}= \sum_{l=3}^{k+1}\frac{k-l+2}{l!}(-f)^{l-2}f^{(l)}A_{k-l+2} = \sum_{l=2}^{k}\frac{k-l+1}{(l+1)!}(-f)^{l-1}f^{(l+1)}A_{k-l+1},
  \end{align*}
  the expression of $A_{k+1}'$ may be rewritten as 
  \begin{align*}
    A_{k+1}' &= \frac{k+2}{2}f''A_{k}-\frac{1}{2}ff'''A_{k-1}+\sum_{l=2}^{k}\frac{k-l+1}{(l+1)!}(-f)^{l-1}f^{(l+1)}A_{k-l+1}+\sum_{l=3}^{k+1}\frac{1}{l!}(-f)^{l-1}f^{(l+1)}A_{k-l+1}\\
    &=
    \frac{k+2}{2}f''A_{k}+\frac{1}{2}(-f)f'''A_{k-1}+\frac{k-1}{3!}(-f)f'''A_{k-1}\\
    &\hspace{0.5cm}
    +\sum_{l=3}^{k}\left(\frac{k-l+1}{(l+1)!}(-f)^{l-1}f^{(l+1)}+\frac{1}{l!}(-f)^{l-1}f^{(l+1)}\right)A_{k-l+1}\\
&\hspace{0.5cm}
    +\frac{1}{(k+1)!}(-f)^{k}f^{(k+2)}A_{0}\\
    &=
    \frac{k+2}{2}f''A_{k}+\frac{k+2}{3!}(-f)f'''A_{k-1}+\sum_{l=3}^{k}\frac{k+2}{(l+1)!}(-f)^{l-1}f^{(l+1)}A_{k-l+1}\\
&\hspace{0.5cm}+\frac{k+2}{(k+2)!}(-f)^{k}f^{(k+2)}A_{0}\\
    &=
    (k+2)\left(\sum_{l=2}^{k+2}\frac{1}{l!}(-f)^{l-2}f^{(l)}A_{k+2-l}\right) = (k+2)\widehat{A}_{k+1},
  \end{align*} 
  and this finishes the proof.
\end{proof}

\subsection{Some discussions on the convergence order}
Following the procedure of the algorithm, we may compute the convergence factor $C_{k}$ in equation \eqref{rf-expr} as $x_{n}$ approaches $\alpha$. Then $|x_{n+1}-\alpha|\leq C_{k+1}|x_{n}-\alpha|^{k+1}$. The expressions of $C_{2}$ and $C_{3}$ come from Newton's and Halley's method, and whose expressions are well-known. For higher order ones, if we write $C_{k+1}=\lambda_{k+1}/\mu_{k+1}$, we have:
\begin{align*}
%C_{2} &= \frac{f''}{2f'}, \qquad k=2\text{ (Newton's method),}\\
%C_{3} &= \frac{2f^{(3)}f'-3(f'')^2}{12(f')^2-6ff''}, \qquad k=3\text{ (Halley's method),}\\
C_{4} &= \frac{18f'(f'')^{3}-24(f')^{2}f''f'''+4ff'(f''')^{2}+6(f')^{3}f^{(4)}-3ff'f''f^{(4)}}{24(6(f')^{4}-6f(f')^{2}f''+f^{2}f'f''')},\\
\lambda_{5} &= -180f^{(4)}+360f'f''f'''-80(f')^{2}(f''')^{2}-80ff''(f''')^{2}-120(f')^{2}f''f^{(4)}\\
&\hspace{0.5cm}+60f(f'')^{2}f^{(4)} +40ff'f'''f^{(4)}-5f^{2}(f^{(4)})^{2}+24(f')^{3}f^{(5)}-24ff'f''f^{(5)}\\
&\hspace{0.5cm}+4f^{2}f'''f^{(5)},\\
\mu_{5} &= 720(f')^{4}-1080f(f')^{2}f''+180f^{2}(f'')^{2}+240f^{2}f'f'''-30f^{3}f^{(4)},\\ 
\lambda_{6}&= 540(f'')^{5}-1440f'(f'')^{3}f'''+720(f')^{2}f''(f''')^{2}+360f(f'')^{2}(f''')^{2}\\
&\hspace{0.5cm}-160ff'(f''')^{3}+540(f')^{2}(f'')^{2}f^{(4)}-270f(f'')^{3}f^{(4)}-240(f')^{3}f'''f^{(4)}\\
&\hspace{0.5cm}-120ff'f''f'''f^{(4)}+20f^{2}(f''')^{2}f^{(4)}+30f(f')^{2}(f^{(4)})^{2}+30f^{2}f''(f^{(4)})^{2}\\
&\hspace{0.5cm}-144(f')^{3}f''f^{(5)}+144ff'(f'')^{2}f^{(5)}+48f(f')^{2}f'''f^{(5)}-48f^{2}f''f'''f^{(5)}\\
&\hspace{0.5cm}-12f^{2}f'f^{(4)}f^{(5)}+\frac{6}{5}f^{3}(f^{(5)})^{2}+24(f')^{4}f^{(6)}-36f(f')^{2}f''f^{(6)}\\
&\hspace{0.5cm}+6f^{2}(f'')^{2}f^{(6)}+8f^{2}f'f'''f^{(6)}-f^{3}f^{(4)}f^{(6)},\\
\mu_{6} &= 17280 (f')^{5}-34560f(f')^{3}f''+12960f^{2}f'(f'')^{2}+8640f^{2}f(f')^{2}f'''\\
&\hspace{0.5cm}-2880f^{3}f''f'''-1440f^{3}f'f^{(4)}+144f^{4}f^{(5)},
\end{align*}
%For higher order ones $C_{k+1}=\lambda_{k+1}/\mu_{k+1}$, we have 
%\begin{equation}
%\begin{aligned}
%\lambda_{5} &= -180f^{(4)}+360f'f''f'''-80(f')^{2}(f''')^{2}-80ff''(f''')^{2}-120(f')^{2}f''f^{(4)}\\
%&\hspace{0.5cm}+60f(f'')^{2}f^{(4)} +40ff'f'''f^{(4)}-5f^{2}(f^{(4)})^{2}+24(f')^{3}f^{(5)}-24ff'f''f^{(5)}\\
%&\hspace{0.5cm}+4f^{2}f'''f^{(5)},\\
%\mu_{5} &= 720(f')^{4}-1080f(f')^{2}f''+180f^{2}(f'')^{2}+240f^{2}f'f'''-30f^{3}f^{(4)},\\
%\lambda_{6}&= 540(f'')^{5}-1440f'(f'')^{3}f'''+720(f')^{2}f''(f''')^{2}+360f(f'')^{2}(f''')^{2}\\
%&\hspace{0.5cm}-160ff'(f''')^{3}+540(f')^{2}(f'')^{2}f^{(4)}-270f(f'')^{3}f^{(4)}-240(f')^{3}f'''f^{(4)}\\
%&\hspace{0.5cm}-120ff'f''f'''f^{(4)}+20f^{2}(f''')^{2}f^{(4)}+30f(f')^{2}(f^{(4)})^{2}+30f^{2}f''(f^{(4)})^{2}\\
%&\hspace{0.5cm}-144(f')^{3}f''f^{(5)}+144ff'(f'')^{2}f^{(5)}+48f(f')^{2}f'''f^{(5)}-48f^{2}f''f'''f^{(5)}\\
%&\hspace{0.5cm}-12f^{2}f'f^{(4)}f^{(5)}+\frac{6}{5}f^{3}(f^{(5)})^{2}+24(f')^{4}f^{(6)}-36f(f')^{2}f''f^{(6)}\\
%&\hspace{0.5cm}+6f^{2}(f'')^{2}f^{(6)}+8f^{2}f'f'''f^{(6)}-f^{3}f^{(4)}f^{(6)},\\
%\mu_{6} &= 17280 %(f')^{5}-34560f(f')^{3}f''+12960f^{2}f'(f'')^{2}+8640f^{2}f(f')^{2}f'''\\
%&\hspace{0.5cm}-2880f^{3}f''f'''-1440f^{3}f'f^{(4)}+144f^{4}f^{(5)},
%\end{aligned}
%\end{equation}
and the list continues by induction.

\section{Application 1: Numerical Implementation of Householder's Method}
 
In this section we discuss how Theorem \ref{maintheorem} can be used to numerically implement Householder's method and give a more explicit description of its convergence factor. Recall that the Householder's method for solving $f(x)=0$ is given by an initial guess $x_{0}$ and a sequence of real numbers iteratively defined by:
\[
x_{n+1}=x_{n}+k\frac{(1/f)^{(k-1)}(x_{n})}{(1/f)^{(k)}(x_{n})},
\]
which guarantees convergence of $x_{n}$ to the nearest root $\alpha$ in the following manner
\[
|\alpha-x_{n+1}|\leq C|\alpha-x_{n}|^{k+1},
\]
for some constant $C$ which remains to be computed. The main result of this section is the following:
\medskip
\begin{theorem}\label{Theorem-Householder}
	Let $A_{k}$ be coefficients given by equation \eqref{mtAk}.  Then
	\[
	k\frac{(1/f)^{(k-1)}}{(1/f)^{(k)}} = -f\frac{A_{k-1}}{A_{k}}.
	\]
	In other words, the Householder's method can numerically evaluated by Algorithm \ref{RFA}.
\end{theorem}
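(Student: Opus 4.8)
The plan is to prove the clean closed form
\[
A_{k}\;=\;\frac{(-1)^{k}}{k!}\,f^{k+1}\,(1/f)^{(k)}\qquad\text{for all }k\geq 0,
\]
valid wherever $f$ is smooth enough and nonvanishing so that both sides make sense. Theorem~\ref{Theorem-Householder} is then immediate: writing $g:=1/f$, we get $A_{k-1}=\frac{(-1)^{k-1}}{(k-1)!}f^{k}g^{(k-1)}$ and $A_{k}=\frac{(-1)^{k}}{k!}f^{k+1}g^{(k)}$, so
\[
-f\,\frac{A_{k-1}}{A_{k}}\;=\;-f\cdot\frac{(-1)^{k-1}k!}{(-1)^{k}(k-1)!}\cdot\frac{f^{k}}{f^{k+1}}\cdot\frac{g^{(k-1)}}{g^{(k)}}\;=\;-f\cdot\Bigl(-\frac{k}{f}\Bigr)\frac{g^{(k-1)}}{g^{(k)}}\;=\;k\,\frac{(1/f)^{(k-1)}}{(1/f)^{(k)}},
\]
and the assertion about Algorithm~\ref{RFA} follows since that algorithm computes precisely the $A_{k}$.

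To prove the closed form I would use (strong) induction on $k$. The cases $k=0,1,2$ are checked directly from $g=1/f$, $g'=-f'/f^{2}$, $g''=(2(f')^{2}-f''f)/f^{3}$: multiplying by $\tfrac{(-1)^{k}}{k!}f^{k+1}$ recovers exactly $A_{0}=1$, $A_{1}=f'$, $A_{2}=(f')^{2}-\tfrac12 f''f$ from \eqref{mtAk}. For the inductive step (say $m\geq 3$), the key input is the Leibniz rule applied to $f\cdot g\equiv 1$: for every $m\geq 1$,
\[
0=\sum_{l=0}^{m}\binom{m}{l}f^{(l)}g^{(m-l)},\qquad\text{hence}\qquad f\,g^{(m)}=-\sum_{l=1}^{m}\binom{m}{l}f^{(l)}g^{(m-l)}.
\]
Substituting the induction hypothesis $g^{(m-l)}=\frac{(-1)^{m-l}(m-l)!}{f^{m-l+1}}A_{m-l}$ for $l=1,\dots,m$ into the right-hand side, using $\binom{m}{l}(m-l)!=\frac{m!}{l!}$ to cancel factorials, and multiplying through by $\frac{(-1)^{m}f^{m}}{m!}$, one finds that the $l=1$ term produces $f'A_{m-1}$, while for $l\geq 2$ the identity $(-1)^{l}f^{l-1}=-(-f)^{l-1}=-(-f)(-f)^{l-2}$ lets one pull out a common factor $-f$, leaving
\[
\frac{(-1)^{m}}{m!}f^{m+1}g^{(m)}\;=\;f'A_{m-1}-f\sum_{l=2}^{m}\frac{1}{l!}(-f)^{l-2}f^{(l)}A_{m-l}\;=\;f'A_{m-1}-f\widehat{A}_{m-1},
\]
which is exactly the recurrence in \eqref{mtAk} defining $A_{m}$; hence the left-hand side equals $A_{m}$, closing the induction.

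The only delicate point is the sign-and-factorial bookkeeping in that last display: one must track that the $(m-l)!$ coming from differentiating $1/f$ cancels the binomial denominator, that $1/(-1)^{m}=(-1)^{m}$ when clearing the prefactor, and that $(-1)^{l}f^{l-1}=-(-f)(-f)^{l-2}$ so the extracted factor is precisely $-f$ and the residual sum matches $\widehat{A}_{m-1}=\sum_{l=2}^{m}\frac{1}{l!}(-f)^{l-2}f^{(l)}A_{m-l}$ term by term. There is no real analytic obstacle; the content of the proof is simply that the Leibniz recurrence for $(1/f)^{(m)}$ and the defining recurrence for $A_{m}$ become the same equation under the substitution $A_{j}=\frac{(-1)^{j}}{j!}f^{j+1}(1/f)^{(j)}$.
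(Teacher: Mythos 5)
Your argument is correct, and your closed form $A_{k}=\frac{(-1)^{k}}{k!}f^{k+1}(1/f)^{(k)}$ is precisely the paper's Lemma~\ref{lem5} (stated there as $B_{k}=(-1)^{k}k!\,A_{k}$ with $(1/f)^{(k)}=B_{k}/f^{k+1}$); from that point on, the deduction of the theorem is the same one-line computation. Where you genuinely differ is in how the induction step of that lemma is closed. The paper differentiates $(1/f)^{(k)}=B_{k}/f^{k+1}$ to obtain $B_{k+1}=fB_{k}'-(k+1)f'B_{k}$ and must then invoke the differential identity $A_{k}'=(k+1)\widehat{A}_{k}$ (Proposition~\ref{prop2.0.1}), whose own proof is a fairly long induction with delicate reindexing of sums. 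You instead apply the Leibniz rule to $f\cdot(1/f)\equiv 1$ to get a purely algebraic recurrence for $(1/f)^{(m)}$ in terms of lower-order derivatives, and check that under your substitution it becomes, term by term, the defining recurrence $A_{m}=f'A_{m-1}-f\widehat{A}_{m-1}$ of \eqref{mtAk}; your sign and factorial bookkeeping ($\binom{m}{l}(m-l)!=m!/l!$, the $l=1$ term yielding $f'A_{m-1}$, and $(-1)^{l}f^{l-1}=f(-f)^{l-2}$ supplying the common factor $-f$ for $l\geq 2$, with the residual sum equal to $\widehat{A}_{m-1}$) is accurate. Your route is more self-contained: it never differentiates the $A_{j}$ and bypasses Proposition~\ref{prop2.0.1} entirely, whereas the paper's route establishes that proposition as an identity of independent interest; in fact, comparing your Leibniz recurrence with the paper's differentiated one yields an alternative proof of Proposition~\ref{prop2.0.1}. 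Both arguments implicitly require $f\neq 0$ at the point of evaluation, which is already needed for the statement of the theorem to make sense.
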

Before we prove this theorem, we start with the following lemma.
\medskip
\begin{lemma}\label{lem5}
	Let $B_{k}$ be such that 
	\begin{eqnarray}\label{lem5stat}
	\left(\frac{1}{f}\right)^{(k)} = \frac{B_{k}}{f^{k+1}},
	\end{eqnarray}
	then $B_{k}=(-1)^{k}k!A_{k}$.
\end{lemma}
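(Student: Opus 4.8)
The plan is to prove the identity $B_k = (-1)^k k!\, A_k$ by induction on $k$, using the recurrence \eqref{Ak} for the $A_k$ and deriving a parallel recurrence for the $B_k$ from the definition \eqref{lem5stat}. First I would check the base cases: from $(1/f)' = -f'/f^2$ we get $B_1 = -f'= -1\cdot 1!\cdot A_1$, and from $(1/f)'' = (2(f')^2 - f''f)/f^3$ we get $B_2 = 2(f')^2 - f''f = 2!\,\big((f')^2 - \tfrac12 f''f\big) = 2!\,A_2$, matching the claim for $k=1,2$. These also fix the normalisation constants in the induction.

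Next I would obtain the recurrence for $B_k$. Differentiating $(1/f)^{(k)} = B_k / f^{k+1}$ once more and clearing denominators gives $B_{k+1} = f B_k' - (k+1) f' B_k$. The main work is then to substitute the induction hypothesis $B_k = (-1)^k k!\, A_k$ (and $B_j = (-1)^j j!\, A_j$ for smaller $j$, which I will need because $A_k'$ involves the lower $A_j$'s through $\widehat A_k$) and show the result equals $(-1)^{k+1}(k+1)!\, A_{k+1}$. Here I would invoke Proposition \ref{prop2.0.1}, which states $A_k' = (k+1)\widehat A_k$: this converts $B_k' = (-1)^k k!\, A_k' = (-1)^k k!\,(k+1)\widehat A_k = (-1)^k (k+1)!\,\widehat A_k$. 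Plugging in, $B_{k+1} = (-1)^k(k+1)!\, f\widehat A_k - (k+1)f'\cdot(-1)^k k!\, A_k$. To match the target $(-1)^{k+1}(k+1)!\, A_{k+1} = (-1)^{k+1}(k+1)!\,(f' A_k - f\widehat A_k)$, I factor out $(-1)^{k+1}(k+1)!$: the $f\widehat A_k$ terms combine as $-(-1)^{k+1}(k+1)!\, f\widehat A_k$, correct; and $(-1)^k k!\,(k+1) = (k+1)!\,(-1)^k = -(-1)^{k+1}(k+1)!$, so the $f'A_k$ term is $-(-1)^{k+1}(k+1)!\cdot(-1)f'A_k = (-1)^{k+1}(k+1)!\, f' A_k$, also correct. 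Hence $B_{k+1} = (-1)^{k+1}(k+1)!\,(f'A_k - f\widehat A_k) = (-1)^{k+1}(k+1)!\, A_{k+1}$.

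The main obstacle, and the only place requiring genuine care rather than bookkeeping, is establishing the $B_k$-recurrence $B_{k+1} = f B_k' - (k+1) f' B_k$ cleanly and making sure the induction is set up so that Proposition \ref{prop2.0.1} is applicable — in particular the proposition is stated for $k \ge 2$, so the step from $k=1$ to $k=2$ should be verified by hand (it is, from the base-case computations above) and the inductive step invoking the proposition runs for $k \ge 2$. Sign tracking through the factor $(-1)^k$ versus $(-1)^{k+1}$ is the other place a reader will want to see made explicit, so I would display that computation in full rather than abbreviating it.
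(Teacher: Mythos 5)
Your proposal is correct and follows essentially the same route as the paper's proof: induction with the recurrence $B_{k+1}=fB_k'-(k+1)f'B_k$ obtained by differentiating $(1/f)^{(k)}=B_k/f^{k+1}$, followed by an appeal to Proposition \ref{prop2.0.1} to rewrite $A_k'$ as $(k+1)\widehat{A}_k$. Your treatment is slightly more explicit about the base cases and the sign bookkeeping, but the argument is the same.
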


\begin{proof}
	We prove this by induction. The case for $k=2$ is just a direct verification. Suppose that the lemma is true for all $k$, then differentiating both sides of the expression of $(1/f)^{(k)}$ with respect to $x$, we obtain an expression of $B_{k+1}$ in terms of $B_{k}$:
	\begin{align*}
		B_{k+1} &= fB_{k}'-(k+1)f'B_{k}
		= (-1)^{k}(fk!A_{k}'-(k+1)!f'A_{k}),
	\end{align*}
	where we use the induction hypothesis. Using Proposition \ref{prop2.0.1}, we see that 
	\begin{align*}
		B_{k+1} = (-1)^{k}(k+1)!(f\widehat{A}_{k}-f'A_{k})
		= (-1)^{k+1}(k+1)!A_{k+1},
	\end{align*}
	and this finishes the proof.
\end{proof}

\begin{proof}[Proof of Theorem \ref{Theorem-Householder}]
	Using equation \eqref{lem5stat} and the result in Lemma \ref{lem5}:
	\begin{align*}
		k\frac{(1/f)^{(k-1)}(x_{n})}{(1/f)^{(k)}(x_{n})} = 
		kf\frac{B_{k-1}}{B_{k}} = 
		kf\frac{(-1)^{k-1}(k-1)!A_{k-1}}{(-1)^{k}k!A_{k}}
		=
		-f\frac{A_{k-1}}{A_{k}},
	\end{align*}
	which concludes the proof.
\end{proof} 
\medskip
\begin{corollary}
	Let $f(x)=0$ be an equation and $x_{0}=x_{0}'=x_{0}''$ be the first guess of the actual solution $\alpha$. Let $x_{n}'$ and $x_{n}''$ be sequences given  by
	\begin{equation}
		\begin{aligned}
			x_{n+1}' = x_{n}' + k\frac{(1/f)^{(k-1)}(x_{n}')}{(1/f)^{(k)}(x_{n}')},\qquad
			x_{n+1}'' = x_{n}'' -\frac{f(x_{n}'')A_{k-1}(x_{n}'')}{A_{k}(x_{n}'')}.
		\end{aligned}
	\end{equation}
	Let $C$ be such that 
	\[
	|x_{n+1}''-\alpha|\leq C(x_{n}'',\alpha)|x_{n}''-\alpha|^{k+1}.
	\]
	Then for the same $C$,
	\[
	|x_{n+1}'-\alpha|\leq C(x_{n}'',\alpha)|x_{n}'-\alpha|^{k+1}.
	\]
\end{corollary}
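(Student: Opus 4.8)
The plan is to exploit the fact that Theorem~\ref{Theorem-Householder} is an \emph{identity of functions}, not merely a numerical coincidence along one sequence. For every $x$ in the common domain on which the expressions are defined,
\[
x + k\frac{(1/f)^{(k-1)}(x)}{(1/f)^{(k)}(x)} \;=\; x - \frac{f(x)A_{k-1}(x)}{A_{k}(x)} \;=:\; g(x).
\]
Thus the Householder iteration and the proposed iteration are driven by exactly the \emph{same} map $g$: one has $x_{n+1}' = g(x_{n}')$ and $x_{n+1}'' = g(x_{n}'')$.

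First I would prove by induction on $n$ that $x_{n}' = x_{n}''$ for all $n\ge 0$. The base case is the hypothesis $x_{0}' = x_{0}''$; the inductive step is immediate, since $x_{n+1}' = g(x_{n}') = g(x_{n}'') = x_{n+1}''$. (In passing one notes this requires $A_{k}(x_{n}'')\neq 0$, the same non-degeneracy already implicit in Theorems~\ref{maintheorem} and~\ref{Theorem-Householder}.) Given this, the right-hand side of the hypothesised bound is unchanged under the substitution of one sequence for the other: $C(x_{n}'',\alpha)=C(x_{n}',\alpha)$, $|x_{n}''-\alpha|=|x_{n}'-\alpha|$, and $|x_{n+1}''-\alpha|=|x_{n+1}'-\alpha|$. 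Plugging these equalities into
\[
|x_{n+1}''-\alpha| \;\le\; C(x_{n}'',\alpha)\,|x_{n}''-\alpha|^{k+1}
\]
directly yields $|x_{n+1}'-\alpha| \le C(x_{n}'',\alpha)\,|x_{n}'-\alpha|^{k+1}$, which is the assertion.

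The step I would flag as the main (and only mild) obstacle is the bookkeeping needed to make the first paragraph rigorous: one must check that the identity of Theorem~\ref{Theorem-Householder} holds on a whole neighbourhood of $\alpha$ (so that it may legitimately be evaluated at each iterate, not at a single point), and that the convergence factor $C$ emerging from the elimination procedure in the proof of Theorem~\ref{maintheorem} is a bona fide function of the pair $(x_{n},\alpha)$ alone, so that the symbol $C(x_{n}'',\alpha)$ versus $C(x_{n}',\alpha)$ has unambiguous meaning. Both points follow from the observation that the $A_{j}$ are universal polynomial expressions in the derivatives of $f$ (equation~\eqref{mtAk}) and from Lemma~\ref{lem5}, which is an identity of rational functions in those same derivatives; hence the equality $g_{\text{Householder}}\equiv g_{\text{proposed}}$ and the equality of the associated convergence factors are formal consequences, valid wherever the denominators do not vanish.
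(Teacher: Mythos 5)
Your argument is correct and is essentially the paper's own proof: the paper likewise invokes Theorem~\ref{Theorem-Householder} to conclude $x_{n}'=x_{n}''$ for all $n$ and then transfers the bound verbatim. Your added remarks on the induction, the non-vanishing of $A_{k}$, and the well-definedness of $C(x_{n},\alpha)$ merely make explicit what the paper leaves implicit.
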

\begin{proof}
	By Theorem \ref{Theorem-Householder}, $x_{n}''=x_{n}'$ for all $n$. 
\end{proof}

\section{Application 2: Numerical Lower Bound of Minimal Distance of a Random Linear Code}
\subsection{Preliminaries and Motivation for Solving $H_{q}(x)=y$}
In coding theory, finding the minimal distance of a linear code is crucial in determining if one will be able to decode a message. However, it is an NP-complete problem. We will see that solving the equation $H_{q}(x)=y$, where $q$ is a prime number and $H_{q}$ is the  $q$-ary entropy function, plays an important role in computing a lower bound of the minimal distance of a random linear code. We first  recall some preliminary notions \cite{Meurer-2012, Barg-1997, vL98}. 

Let $\mathbb{F}_{q}^{n}$ be an $n$-dimensional space over finite field with $q$ elements. A $q$-ary $[n,k]$ linear code $\mathcal{C}\subseteq \mathbb{F}_{q}^{n}$ is a vector subspace of dimension $k$. The ratio $R=\lim_{n\rightarrow\infty}\frac{k}{n}$ is called the rate  of $\mathcal{C}$. The space $\mathbb{F}_{q}^{n}$ is equipped with the Hamming weight
\[
  \text{wt}_{H}(x)=\#\{x_{i}:\ x_{i}\neq 0\},\qquad x=(x_{1},\cdots,x_{n})\in\mathbb{F}_{q}^{n},
\]
which defines the Hamming distance between any two vectors:
\[
  d_{H}(x,y):=\text{wt}_{H}(x-y).
\]
\begin{definition}
  The minimal distance $d$ of an $[n,k]$-linear code $\mathcal{C}\subseteq \mathbb{F}_{q}^{n}$ is the minimum distance of every pair of elements in $\mathcal{C}$:
  \[
    d := d(\mathcal{C})=\min_{x,y\in\mathcal{C}} d_{H}(x,y).
  \]
\end{definition}

Given $w\in \mathbb{N}$, we let $B_{q}(n,w)$ be the set of all elements in $\mathbb{F}_{q}^{n}$ of Hamming weight less than or equal to $w$. It is a ball that is centered at ${\bf 0}\in\mathbb{F}_{q}^{n}$.

Let $G$ be a generating matrix of the linear code $\mathcal{C}$ of dimension $k$. It is a matrix consisting of rows that form a basis of the linear code $\mathcal{C}$. It gives an injective encoding function 
\begin{equation}
\begin{aligned}
\text{Encode}:\ \mathbb{F}_{q}^{k} &\longrightarrow \mathbb{F}_{q}^{n}\\
m &\longmapsto mG.
\end{aligned}
\end{equation}
Given an element $x\in\mathbb{F}_{q}^{n}$, a decoding function $\text{Decode}:\ \mathbb{F}_{q}^{n}\longrightarrow\mathcal{C}$ should be a function that returns a codeword $c\in\mathcal{C}$ that is closest to $x$ amongst all the choices in $\mathcal{C}$. In other words, $d(x,\text{Decode}(x))=d(x,\mathcal{C})$. 

A sender perturbs the codeword with a small vector $e\in\mathbb{F}_{q}^{n}$ and sends the message $x=mG+e$, where the error $e$ is chosen from the ball $B_{q}(n,w)$ of some fixed radius $w$; while the task of its recipient is to recover $m$. The parity check matrix $H$ is an $(n-k)\times n$ matrix satisfying $GH^{T}=0$. Thus the message is a codeword if and only if $xH^{T}=0$, and the presence of $e$ may cause $xH^{T}$ to be non-zero. In this case, one is required to solve the syndrome decoding problem $s:=xH^{T}=eH^{T}$ for $e$, and the recipient will then be able to recover $m$.

When the sender perturbs the codeword, he needs to determine a good set $B_{q}(n,w)$ of  error  vectors so that he would be able to perform encryption  $mG+e$ without ambiguity. It is possible for different vectors $m_{1}$, $m_{2}$, and for different errors  $e_{1}$, $e_{2}$, to give the same sum $v:=m_{1}G+e_{1}=m_{2}G+e_{2}$. This happens when the set intersection of $m_{1}G+B_{q}(n,w)$ and $m_{2}G+B_{q}(n,w)$ is non-empty, and contains $v$. The recipient will therefore be unable to determine if the original message is $m_{1}$ or $m_{2}$.

Thus to avoid this difficulty, it is necessary to define an error set based on the minimal distance $d_{H}(\mathcal{C})$ of a linear code $\mathcal{C}$. Indeed, we observe that if $d_{H}(\mathcal{C})>0$, then for $0<\lambda\leqslant d_{H}(\mathcal{C})-1$, any two sets $m_{1}G+B_{q}(n,\frac{\lambda}{2})$,  $m_{2}G+B_{q}(n,\frac{\lambda}{2})$ will be disjoint. However, sometimes in practice, one deals with linear codes of very large dimensions, and finding the exact minimum distance may not be computationally feasible. From this point of view, one asks if a good choice of the radius $w$ for the ball $B_{q}(n,w)$ can be found for \textit{any randomly chosen} $[n,k]$-linear codes. The following classical result answers the question.

\medskip
\begin{theorem}[Gilbert-Varshamov theorem]\label{GV}
	For $0<R<1$, let $D_{\text{GV}}(R)$ be the unique solution $x\in [0,\frac{q-1}{q}]$ that satisfies
	\begin{equation}\label{GVeq}
	H_{q}(x)=1-R,
	\end{equation}
	where $H_{q}$ is the $q$-ary entropy function in Definition \ref{Hq}. Then for almost all linear codes $\mathcal{C}$ of rate $R=\lim_{n\rightarrow \infty}\frac{k}{n}$, it holds
	\[
	d_{H}(\mathcal{C})\geq \lfloor D_{\text{GV}}(R)n\rfloor.\qed
	\]
\end{theorem}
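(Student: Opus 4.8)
The plan is to split the statement into two independent claims: (i) existence and uniqueness of the solution $D_{\text{GV}}(R)$ to $H_q(x) = 1-R$ on $[0,\frac{q-1}{q}]$, and (ii) the coding-theoretic assertion that almost all linear codes of rate $R$ have minimal distance at least $\lfloor D_{\text{GV}}(R)n\rfloor$. For (i), I would study the function $H_q$ on $[0,\frac{q-1}{q}]$ directly from Definition \ref{Hq}: compute $H_q'(x) = \log_q(q-1) - \log_q x + \log_q(1-x) = \log_q\!\bigl(\frac{(q-1)(1-x)}{x}\bigr)$, observe that this is strictly positive on $(0,\frac{q-1}{q})$ and vanishes at $x = \frac{q-1}{q}$, so $H_q$ is strictly increasing on the interval with $H_q(0) = 0$ and $H_q(\frac{q-1}{q}) = 1$. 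Since $0 < R < 1$ gives $0 < 1-R < 1$, the intermediate value theorem plus strict monotonicity yields a unique $x = D_{\text{GV}}(R)$ in the open interval, settling (i).

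For (ii), the standard route is a counting/averaging argument over random linear codes. First I would fix the volume estimate: the Hamming ball $B_q(n,w)$ with $w = \lfloor D_{\text{GV}}(R)n\rfloor$ satisfies $|B_q(n,w)| = \sum_{i=0}^{w}\binom{n}{i}(q-1)^i \leq q^{\,n H_q(w/n)}$, and since $w/n \to D_{\text{GV}}(R)$ and $H_q$ is continuous, $H_q(w/n) \to 1-R$; hence $|B_q(n,w)| \leq q^{\,n(1-R) + o(n)}$. Next, model a random $[n,k]$ code, e.g. as the image of a uniformly random $k\times n$ generating matrix $G$ over $\mathbb{F}_q$, or via a random parity-check matrix. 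For any fixed nonzero message $m \in \mathbb{F}_q^k$, the codeword $mG$ is uniformly distributed over $\mathbb{F}_q^n$, so $\Pr[\,mG \in B_q(n,w-1)\setminus\{0\}\,] \leq |B_q(n,w)|/q^n \leq q^{-nR + o(n)}$. Taking a union bound over the $q^k - 1 \leq q^{nR}$ nonzero messages, the probability that the code contains a nonzero codeword of weight $< w$ is at most $q^{nR}\cdot q^{-nR + o(n)} = q^{o(n)}\cdot q^{-\epsilon n}$ once one is slightly careful — so with an appropriate strict inequality in the exponent (choosing $w$ just below the threshold, or invoking that $H_q(w/n) < 1-R$ strictly for $w/n$ bounded away from $D_{\text{GV}}(R)$) this probability tends to $0$. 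Therefore almost all codes have minimum distance (which for a linear code equals the minimum nonzero weight) at least $w = \lfloor D_{\text{GV}}(R)n\rfloor$.

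The main obstacle is making the exponent bookkeeping in step (ii) rigorous: the naive union bound gives an exponent $nR - nR = 0$, so one needs the strict inequality $H_q(w/n) < 1-R$ that comes from $w/n$ being genuinely below $D_{\text{GV}}(R)$, or equivalently one proves the bound $d_H(\mathcal{C}) \geq \lfloor (D_{\text{GV}}(R)-\epsilon)n\rfloor$ for every $\epsilon>0$ and then absorbs the $\epsilon$. Handling the floor function and the $o(n)$ polynomial factors from $\binom{n}{i}$ cleanly is the delicate part; everything else is routine. I would remark that this theorem is classical (due to Gilbert \cite{Gilbert-1952} and Varshamov \cite{Varshamov-1957}) and that its role here is only to motivate Problem \ref{prob2} — namely that computing $D_{\text{GV}}(R)$ reduces to numerically solving $H_q(x) = 1-R$, which is exactly the setting to which Theorem \ref{maintheorem} will be applied.
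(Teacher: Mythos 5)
The paper does not prove this theorem at all: it is quoted as a classical result (note the \verb|\qed| attached directly to the statement) with the attribution to Gilbert \cite{Gilbert-1952} and Varshamov \cite{Varshamov-1957}, and its only role in the paper is to motivate Problem \ref{prob2}, exactly as you observe in your closing remark. So there is nothing in the paper to compare your argument against line by line; what you have written is the standard textbook proof, and it is essentially sound. Part (i) is correct: $H_q'(x)=\log_q\bigl(\tfrac{(q-1)(1-x)}{x}\bigr)$ is positive on $(0,\tfrac{q-1}{q})$, $H_q(0)=0$, $H_q(\tfrac{q-1}{q})=1$, so existence and uniqueness follow from monotonicity and the intermediate value theorem. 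Part (ii) is the standard first-moment argument, and you have correctly identified the one genuine delicacy: with $w=\lfloor D_{\text{GV}}(R)n\rfloor$ the union bound $q^{k}\cdot|B_q(n,w-1)|/q^{n}\leq q^{n(R+H_q((w-1)/n)-1)}$ has exponent tending to $0$ rather than to $-\infty$, so the argument as written only gives a probability bounded by a constant, not one tending to zero. The clean fix is the one you name — prove $d_H(\mathcal{C})\geq(D_{\text{GV}}(R)-\epsilon)n$ for every $\epsilon>0$ with probability $1-q^{-\Omega(n)}$ — which is in fact how the "almost all" statement is usually formalized; the version with the exact floor is a common slight abuse of the classical statement, and the paper inherits it from the literature rather than proving it. One cosmetic slip: your probability bound should read $|B_q(n,w-1)|/q^{n}$ rather than $|B_q(n,w)|/q^{n}$, and the generator-matrix model requires the routine observation that a uniformly random $k\times n$ matrix has full rank with probability $1-O(q^{k-n})$.
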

Thanks to this theorem, one will have a high chance of decoding success if the error $e$ lies in $B_{q}(n,0.5\cdot\lfloor  D_{GV}(R)n\rfloor)$. In general, equation \eqref{GVeq} has no closed-form solution except for the trivial cases $R=0$ or $R=1$, and will thus  require numerical methods. We will therefore solve $H_{q}(x)=y$ for various values of $y$ using  Theorem \ref{maintheorem}, and compare its performance with Householder's method. When the order of convergence is high, Theorem \ref{maintheorem} can return results much faster than Householder's method, which sometimes fails to compute. 

We will first focus on the binary entropy function $H_{2}(x)$, which appears quite commonly when working with the finite field $\mathbb{F}_{2}$, before looking at the general case $H_{q}$. For $0\leq y \leq 1$, let $\alpha$ be the solution in $\left[0,\frac{q-1}{q}\right]$ such that $H_{q}(\alpha)=y$. For any $x\in \left[0,\frac{q-1}{q}\right]$, mean value theorem gives
\[
x-\alpha = \frac{H_{q}(x)-y}{H_{q}'(\xi)}, 
\]
where $\xi$ lies between $x$ and $\alpha$. When $x$ and $\alpha$ are very close to each other, we therefore have an approximation 
\begin{equation}\label{x-alpha}
|x-\alpha|\approx \frac{\left|H_{q}(x)-y\right|}{\left|H_{q}'(x)\right|},
\end{equation}
which then allows us to estimate the closeness of $x_{n}$ the $n$-th iterated value of the root-finding method to the actual solution $\alpha$ despite not knowing its value. The numerical computations are performed on SageMath with 10000 bits of precision, or about 3010 decimal places. 

%In particular for $q=2$, we have the binary entropy function
%\[
%  H_{2}(x)=-x\log_{2}(x)-(1-x)\log_{2}(1-x),
%\]
%which has an interpretation in combinatorics as follows. Given a coin with probability of head occuring being $p$, and tails being $q=1-p$.  Stirling's formula shows that the number of ways $k$ heads appear after $n$ tosses (where $n$ is large) has the approximation ${n\choose k}\approx 2^{nH_{2}(\frac{k}{n})+o(1)}$. If $k$ is the expected value of number of heads that occur, then the expectation formula for the binomial distribution shows that $k=np$. Thus after many random tosses, the number of ways the heads occur with expected number of times is $2^{nH_{2}(p)+o(1)}$, which reaches maximum when $p=1/2$, and is strictly less than maximum when the coin is biased. 

\subsection{Root-Finding methods on $H_{2}(x)=y$}

In this subsection we will compute numerical solutions to $H_{2}(x)=y$, where $0.1\leq y\leq 0.9$ at $0.1$-interval, and $0\leq x\leq 0.5$. Equation \ref{H2-table1} shows their respective numerical values. 

\begin{equation}\label{H2-table1}
	\begin{aligned}
	H_{2}(0.01298686205551...)&=0.1,\qquad 
	H_{2}(0.03112446030478...) =0.2,\\
	H_{2}(0.05323904077679...)&=0.3,\qquad
	H_{2}(0.07938260048064...)=0.4,\\
	H_{2}(0.11002786443836...)&=0.5,\qquad
	H_{2}( 0.14610240341189...)=0.6,\\
	H_{2}(0.18929770537063...)&=0.7,\qquad
	H_{2}(0.24300385380895...)=0.8,\\
	H_{2}(0.31601934632361...)&=0.9.\\
	\end{aligned}
\end{equation}
%\begin{table}
%	\centering
%	\begin{tabular}{|c|c|}
%		\hline
%		$H_{2}(x)=$ & Solution $x=$\\
%		\hline
%		0.1 & 0.012986862055518...\\
%		\hline
%		0.2 &  0.031124460304789... \\
%		\hline 
%		0.3 &  0.053239040776797... \\
%		\hline
%		0.4 & 0.079382600480649...\\
%		\hline
%		0.5 & 0.11002786443836... \\
%		\hline 
%		0.6 & 0.14610240341189...\\
%		\hline 
%		0.7 & 0.18929770537063...\\
%		\hline 
%		0.8 & 0.24300385380895...\\
%		\hline
%		0.9 & 0.31601934632361... \\
%		\hline
%	\end{tabular}
%	\caption{Numerical solution of $H_{2}(x)=y$ for $x\in [0,0.5]$ where $y$ ranges from $0.1$ to $0.9$ at $0.1$ interval. }\label{H2-table1}
%\end{table}

Since a general $q$-ary entropy function can be expressed in terms of the binary entropy function $H_{2}$, we will focus on $H_{2}$ first.  By a theorem of Tops{\o}e \cite{flemming_bounds_2001}, the binary entropy function is bounded above and below by 
\[
4x(1-x)\leq H_{2}(x) \leq (4x(1-x))^{\frac{1}{\ln 4}}
\]
on the interval $0\leq x\leq 0.5$. To simplify notations, we write $f(x)=4x(1-x)$ and $g(x)=(4x(1-x))^{\frac{1}{\ln 4}}$, and let $f^{-1}$, $g^{-1}$ be their respective inverses. We let $H_{2}^{-1}$ be the inverse of $H_{2}$. Therefore, we have
\[
g^{-1}(x)\leq H_{2}^{-1}(x)\leq f^{-1}(x), \quad 0\leq x\leq 1. 
\]

\begin{proposition}\label{propjus}
	Let $y\in [0,1]$ be fixed, and let $\alpha$, $\beta$, $\gamma$ be such that $f(\alpha)=H_{2}(\beta)=g(\gamma)=y$. Write $\sigma := \ln(4)^{\frac{1}{1-\ln(4)}}$. Then 
	\[
	|\alpha-\gamma|\leq \frac{\sigma-\sigma^{\ln(4)}}{2(\sqrt{1-0.5^{\ln(4)}}+\sqrt{0.5})}\approx 0.04007.
	\]
	Hence the distance between any two of $\alpha$, $\beta$, $\gamma$ is bounded above by $0.04007$.
	\end{proposition}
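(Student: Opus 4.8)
The plan is to turn the statement into a one-variable estimate by inverting $f$ and $g$ in closed form, and then to maximise the resulting gap as $y$ ranges over $[0,1]$.

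First, on $[0,\tfrac12]$ we have $f(x)=4x(1-x)=1-(1-2x)^2$ and $g(x)=f(x)^{1/\ln 4}$, so the equations $f(\alpha)=y$ and $g(\gamma)=y$ are solved explicitly by
\[
\alpha=\frac{1-\sqrt{1-y}}{2},\qquad \gamma=\frac{1-\sqrt{1-y^{\ln 4}}}{2}.
\]
Tops{\o}e's inequalities $f\le H_2\le g$ on $[0,\tfrac12]$, together with the fact that $f,g,H_2$ are increasing there, give $f(\beta)\le y$ and $g(\beta)\ge y$, hence $\gamma\le\beta\le\alpha$; so it suffices to bound $\alpha-\gamma$, which then bounds every pairwise distance. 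Subtracting and rationalising the numerator,
\[
\alpha-\gamma=\frac{1}{2}\Bigl(\sqrt{1-y^{\ln 4}}-\sqrt{1-y}\Bigr)=\frac{y-y^{\ln 4}}{2\bigl(\sqrt{1-y^{\ln 4}}+\sqrt{1-y}\bigr)}\ \ge\ 0,
\]
the nonnegativity coming from $\ln 4>1$, which forces $y^{\ln 4}\le y$ on $[0,1]$.

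Next I would bound the two factors separately. For the numerator, elementary calculus shows that $t\mapsto t-t^{\ln 4}$ attains its maximum on $[0,1]$ at the unique stationary point $t=\sigma=(\ln 4)^{1/(1-\ln 4)}$ (solving $(\ln 4)\,t^{\ln 4-1}=1$; this is a maximum since $\ln 4>1$), so $y-y^{\ln 4}\le\sigma-\sigma^{\ln 4}$. For the denominator, $D(y):=\sqrt{1-y^{\ln 4}}+\sqrt{1-y}$ is strictly decreasing, so for $y\le\tfrac12$ we get $D(y)\ge D(\tfrac12)=\sqrt{1-0.5^{\ln 4}}+\sqrt{0.5}$; combining the two estimates yields exactly the bound in the statement for every $y\le\tfrac12$.

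The remaining range $y\in(\tfrac12,1]$ is the delicate part: there $D(y)<D(\tfrac12)$, so pairing the numerator-maximum with $D(\tfrac12)$ is no longer automatic, and in fact $\frac{d}{dy}\bigl(\sqrt{1-y^{\ln 4}}-\sqrt{1-y}\bigr)$ is still positive at $y=\tfrac12$, so the true maximiser lies above $\tfrac12$ and the stated constant is only slightly above the actual supremum. I would close this case by a refined estimate on $y^{\ln 4}$: writing $s=1-y\in[0,\tfrac12)$, all derivatives of $(1-s)^{\ln 4}$ of order $\ge 2$ are positive on $[0,1)$, so $(1-s)^{\ln 4}$ lies above each of its Taylor polynomials at $s=0$; hence for a fixed small degree $m$ there is an explicit polynomial $P_m$ with $1-y^{\ln 4}\le P_m(s)$, reducing the task to the finite elementary inequality $\max_{s\in[0,1/2]}\bigl(\sqrt{P_m(s)}-\sqrt{s}\bigr)\le 2\cdot(\text{stated bound})$, whose critical-point condition clears to a polynomial equation. (Alternatively one can show $\psi(y):=\sqrt{1-y^{\ln 4}}-\sqrt{1-y}$ is unimodal on $(0,1)$, localise its critical point to $(\tfrac12,\tfrac35)$ via sign changes of $\psi'$, and bound $\psi$ on that interval.) Either way, the main obstacle is precisely this: the maximiser is transcendental and the margin between the true maximum of $\alpha-\gamma$ and the stated constant is of order $10^{-4}$, so the polynomial or interval estimates must be carried out with enough precision to land on the correct side of the inequality.
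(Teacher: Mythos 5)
Your derivation coincides with the paper's proof up to the last step: the paper likewise inverts $f$ and $g$ in closed form, writes
\[
f^{-1}(y)-g^{-1}(y)=\frac{y-y^{\ln 4}}{2\left(\sqrt{1-y^{\ln 4}}+\sqrt{1-y}\right)},
\]
bounds the numerator by its value at the stationary point $\sigma$, and bounds the denominator below by its value at $y=\tfrac12$ on the grounds that it is decreasing. Your observation that $\gamma\le\beta\le\alpha$ (via Tops{\o}e's inequalities and monotonicity of $f$, $g$, $H_{2}$) is not written out in the paper, but it is exactly what is needed to pass from a bound on $|\alpha-\gamma|$ to a bound on all pairwise distances, so that addition is welcome.

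Where you depart from the paper is the one genuinely delicate point, and you are right to flag it: a decreasing denominator is bounded below by its value at $\tfrac12$ only for $y\le\tfrac12$; on $(\tfrac12,1]$ it tends to $0$, so the global numerator maximum divided by $D(\tfrac12)$ is not automatically an upper bound there. The paper's proof asserts the denominator bound without this restriction, so strictly speaking your proposal has exposed a gap in the published argument rather than introduced one. That said, your own handling of $y>\tfrac12$ remains a programme: you describe two viable closing strategies (a Taylor-polynomial majorisation of $1-y^{\ln 4}$, or unimodality of $\psi(y)=\sqrt{1-y^{\ln 4}}-\sqrt{1-y}$ together with a localisation of its critical point) but execute neither, and since the true supremum of $\alpha-\gamma$ ($\approx 0.039989$, which the paper itself records just after the proposition) sits only about $10^{-4}$ below the stated constant $0.04007$, the estimates do have to be carried out with enough precision to land on the correct side. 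To turn the proposal into a complete proof you must actually perform one of those two verifications for $y\in(\tfrac12,1]$.
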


\begin{proof}
	We have explicitly 
	\begin{equation}
		\begin{aligned}
			f^{-1}(x) &= \frac{1}{2}\left(1-\sqrt{1-y}\right),\qquad
			g^{-1}(x) = \frac{1}{2}\left(1-\sqrt{1-y^{\ln(4)}}\right).
		\end{aligned}
	\end{equation}
	Therefore 
	\[
	f^{-1}-g^{-1} = \frac{y-y^{\ln(4)}}{2\left(\sqrt{1-y^{\ln(4)}}+\sqrt{1-y}\right)}.
	\]
	The numerator achieves maximum when $y=\sigma$, while the denominator is a decreasing function and hence is bounded below by $\sqrt{1-0.5^{\ln(4)}}+\sqrt{0.5}$. Combining the two inequalities, we conclude the proof. 
\end{proof}

The actual upper bound is in fact close to $0.039989$. Proposition \ref{propjus} provides a theoretical justification for using either $\alpha$ or $\gamma$ as a starting point for root-finding algorithm. A more careful observation shows that the function $(4x(1-x))^{\frac{1}{\ln(4)}}$ is a better fit to $H_{2}(x)$. Indeed, numerical results show that $|\alpha-\beta|\leq 0.039989$ while $|\gamma-\beta|\leq 0.00379$. Hence the convergence rate with $(4x(1-x))^{\frac{1}{\ln(4)}}$ is expected to be much faster.  Table \ref{H2table1} shows that it is indeed the case.  Moreover in both cases, Algorithm \ref{RFA} runs faster than Householder's method. It is observed that for order $k=100$, Algorithm \ref{RFA} takes less than 2 minutes to complete, while Householder's method runs for a very long time. Their running times are illustrated in Figure \ref{runningtimes} as given by the values in Table \ref{H2table1}. Based on curve-fitting, it shows that the running times of our proposed method is approximately $-2.2\times 10^{-12}\ k^{3}+0.02k^2-0.10k+3$ for $f=4x(1-x)$, and $0.003k^3+0.1k^2-0.83k+6$ for $g=(4x(1-x))^{\frac{1}{\ln(4)}}$. On the other hand the respective running times of complexities of Householder's method grow according to  $2.4\exp(0.267k)$ for $f$, and $2.49\exp(0.259k)$ for $g$. This illustrates the point made earlier that the time complexity of our proposed method is $O(k^{3})$ while the time complexity of Householder's method is expected to run at least in $\exp(0.24k)$ time.

\begin{table}[t!]
	\centering 
	
% Please add the following required packages to your document preamble:
% \usepackage{multirow}

	\begin{tabular}{|c|cccccccc|}
		\hline
		\multirow{3}{*}{$y$} & \multicolumn{8}{c|}{$|H_{2}(x_{1})-y|$ with $H_{2}$ approximated by $f$ or $g$}                                                                                                                                                                                                                                 \\ \cline{2-9} 
		& \multicolumn{2}{c|}{$5$}                                             & \multicolumn{2}{c|}{$10$}                                             & \multicolumn{2}{c|}{$15$}                                             & \multicolumn{2}{c|}{$20$}                        \\ \cline{2-9} 
		& \multicolumn{1}{c|}{$f$}         & \multicolumn{1}{c|}{$g$}          & \multicolumn{1}{c|}{$f$}          & \multicolumn{1}{c|}{$g$}          & \multicolumn{1}{c|}{$f$}          & \multicolumn{1}{c|}{$g$}          & \multicolumn{1}{c|}{$f$}          & $g$          \\ \hline
		$0.1$                & \multicolumn{1}{c|}{\small{1.7 e-5}}  & \multicolumn{1}{c|}{\small{6.4 e-8}}  & \multicolumn{1}{c|}{\small{1.5 e-7}}  & \multicolumn{1}{c|}{\small{1.7 e-11}} & \multicolumn{1}{c|}{\small{2.1 e-9}}  & \multicolumn{1}{c|}{\small{7.9 e-15}} & \multicolumn{1}{c|}{\small{3.5 e-11}} & \small{4.5 e-18} \\ \hline
		$0.2$                & \multicolumn{1}{c|}{\small{9.2 e-6}} & \multicolumn{1}{c|}{\small{2.7 e-9}}  & \multicolumn{1}{c|}{\small{3.1 e-8}}  & \multicolumn{1}{c|}{\small{2.5 e-14}} & \multicolumn{1}{c|}{\small{1.7 e-10}} & \multicolumn{1}{c|}{\small{3.8\ e-19}} & \multicolumn{1}{c|}{\small{1.2 e-12}} & \small{7.2 e-24} \\ \hline
		$0.3$                & \multicolumn{1}{c|}{\small{4.4 e-6}} & \multicolumn{1}{c|}{\small{1.8 e-10}} & \multicolumn{1}{c|}{\small{6.4 e-9}}  & \multicolumn{1}{c|}{\small{1.1 e-16}} & \multicolumn{1}{c|}{\small{1.6 e-11}} & \multicolumn{1}{c|}{\small{1.1 e-22}} & \multicolumn{1}{c|}{\small{4.7 e-14}} & \small{1.5 e-28} \\ \hline
		$0.4$                & \multicolumn{1}{c|}{\small{1.9 e-6}} & \multicolumn{1}{c|}{\small{1.2 e-11}} & \multicolumn{1}{c|}{\small{1.2 e-9}}  & \multicolumn{1}{c|}{\small{6.3 e-19}} & \multicolumn{1}{c|}{\small{1.3 e-12}} & \multicolumn{1}{c|}{\small{5.5 e-26}} & \multicolumn{1}{c|}{\small{1.8 e-15}} & \small{5.9 e-33} \\ \hline
		$0.5$                & \multicolumn{1}{c|}{\small{7.5 e-7}} & \multicolumn{1}{c|}{\small{7.5 e-13}} & \multicolumn{1}{c|}{\small{2.0 e-10}} & \multicolumn{1}{c|}{\small{3.1 e-21}} & \multicolumn{1}{c|}{\small{9.0 e-14}} & \multicolumn{1}{c|}{\small{2.2\ e-29}} & \multicolumn{1}{c|}{\small{5.0 e-17}} & \small{1.9 e-37} \\ \hline
		$0.6$                & \multicolumn{1}{c|}{\small{2.9 e-7}} & \multicolumn{1}{c|}{\small{3.5 e-14}} & \multicolumn{1}{c|}{\small{2.4 e-11}} & \multicolumn{1}{c|}{\small{9.3 e-24}} & \multicolumn{1}{c|}{\small{4.2 e-15}} & \multicolumn{1}{c|}{\small{4.4 e-33}} & \multicolumn{1}{c|}{\small{9.0 e-19}} & \small{2.6 e-42} \\ \hline
		$0.7$                & \multicolumn{1}{c|}{\small{1.4 e-7}} & \multicolumn{1}{c|}{\small{1.5 e-15}} & \multicolumn{1}{c|}{\small{1.8 e-12}} & \multicolumn{1}{c|}{\small{9.9 e-27}} & \multicolumn{1}{c|}{\small{1.0 e-16}} & \multicolumn{1}{c|}{\small{2.0 e-37}} & \multicolumn{1}{c|}{\small{7.0 e-21}} & \small{5.2 e-48} \\ \hline
		$0.8$                & \multicolumn{1}{c|}{\small{1.0 e-7}} & \multicolumn{1}{c|}{\small{3.2 e-17}} & \multicolumn{1}{c|}{\small{9.2 e-14}} & \multicolumn{1}{c|}{\small{8.9 e-31}} & \multicolumn{1}{c|}{\small{1.0 e-18}} & \multicolumn{1}{c|}{\small{5.0 e-43}} & \multicolumn{1}{c|}{\small{1.1 e-23}} & \small{2.2 e-55} \\ \hline
		$0.9$                & \multicolumn{1}{c|}{\small{7.7 e-8}} & \multicolumn{1}{c|}{\small{2.5 e-19}} & \multicolumn{1}{c|}{\small{1.9 e-13}} & \multicolumn{1}{c|}{\small{1.6 e-34}} & \multicolumn{1}{c|}{\small{4.7 e-19}} & \multicolumn{1}{c|}{\small{1.1 e-49}} & \multicolumn{1}{c|}{\small{1.2 e-24}} & \small{7.1 e-65} \\ \hline
	\end{tabular}
\medskip

\begin{tabular}{|c|cccccccc|}
	\hline
	\multirow{3}{*}{$y$} & \multicolumn{8}{c|}{$|x_{1}-\alpha|$ for $H_{2}(\alpha)=y$}                                                                                                                                                                             \\ \cline{2-9} 
	& \multicolumn{2}{c|}{$5$}                                     & \multicolumn{2}{c|}{$10$}                                     & \multicolumn{2}{c|}{$15$}                                     & \multicolumn{2}{c|}{$20$}                \\ \cline{2-9} 
	& \multicolumn{1}{c|}{$f$}     & \multicolumn{1}{c|}{$g$}      & \multicolumn{1}{c|}{$f$}      & \multicolumn{1}{c|}{$g$}      & \multicolumn{1}{c|}{$f$}      & \multicolumn{1}{c|}{$g$}      & \multicolumn{1}{c|}{$f$}      & $g$      \\ \hline
	$0.1$                & \multicolumn{1}{c|}{2.7 e-6} & \multicolumn{1}{c|}{1.0 e-8}  & \multicolumn{1}{c|}{2.3 e-8}  & \multicolumn{1}{c|}{2.7 e-12} & \multicolumn{1}{c|}{3.4 e-10} & \multicolumn{1}{c|}{1.3 e-15} & \multicolumn{1}{c|}{5.7 e-12} & 7.2 e-19 \\ \hline
	$0.2$                & \multicolumn{1}{c|}{1.8 e-6} & \multicolumn{1}{c|}{5.5 e-10} & \multicolumn{1}{c|}{6.2 e-9}  & \multicolumn{1}{c|}{5.0 e-15} & \multicolumn{1}{c|}{3.4 e-11} & \multicolumn{1}{c|}{7.6 e-20} & \multicolumn{1}{c|}{2.4 e-13} & 1.5 e-24 \\ \hline
	$0.3$                & \multicolumn{1}{c|}{1.1 e-6} & \multicolumn{1}{c|}{4.2 e-11} & \multicolumn{1}{c|}{1.6 e-9}  & \multicolumn{1}{c|}{2.6 e-17} & \multicolumn{1}{c|}{3.8 e-12} & \multicolumn{1}{c|}{2.8 e-23} & \multicolumn{1}{c|}{1.1 e-14} & 3.6 e-29 \\ \hline
	$0.4$                & \multicolumn{1}{c|}{5.4 e-7} & \multicolumn{1}{c|}{3.5 e-12} & \multicolumn{1}{c|}{3.5 e-10} & \multicolumn{1}{c|}{1.8 e-19} & \multicolumn{1}{c|}{3.8 e-13} & \multicolumn{1}{c|}{1.6 e-26} & \multicolumn{1}{c|}{5.0 e-16} & 1.7 e-33 \\ \hline
	$0.5$                & \multicolumn{1}{c|}{2.5 e-7} & \multicolumn{1}{c|}{2.5 e-13} & \multicolumn{1}{c|}{6.6 e-11} & \multicolumn{1}{c|}{1.0 e-21} & \multicolumn{1}{c|}{3.0 e-14} & \multicolumn{1}{c|}{7.4 e-30} & \multicolumn{1}{c|}{1.7 e-17} & 6.4 e-38 \\ \hline
	$0.6$             & \multicolumn{1}{c|}{1.1 e-7} & \multicolumn{1}{c|}{1.4 e-14} & \multicolumn{1}{c|}{9.5 e-12} & \multicolumn{1}{c|}{3.7 e-24} & \multicolumn{1}{c|}{1.7 e-15} & \multicolumn{1}{c|}{1.7 e-33} & \multicolumn{1}{c|}{3.5 e-19} & 1.0 e-42 \\ \hline
	$0.7$                & \multicolumn{1}{c|}{6.6 e-8} & \multicolumn{1}{c|}{5.5 e-16} & \multicolumn{1}{c|}{8.5 e-13} & \multicolumn{1}{c|}{4.7 e-27} & \multicolumn{1}{c|}{5.0 e-17} & \multicolumn{1}{c|}{9.7 e-38} & \multicolumn{1}{c|}{3.2 e-21} & 2.5 e-48 \\ \hline
	$0.8$                & \multicolumn{1}{c|}{6.3 e-8} & \multicolumn{1}{c|}{2.0 e-17} & \multicolumn{1}{c|}{5.6 e-14} & \multicolumn{1}{c|}{5.4 e-31} & \multicolumn{1}{c|}{6.1 e-19} & \multicolumn{1}{c|}{3.1 e-43} & \multicolumn{1}{c|}{6.8 e-24} & 1.4 e-55 \\ \hline
	$0.9$                & \multicolumn{1}{c|}{6.9 e-8} & \multicolumn{1}{c|}{2.2 e-19} & \multicolumn{1}{c|}{1.7 e-13} & \multicolumn{1}{c|}{1.5 e-34} & \multicolumn{1}{c|}{4.2 e-19} & \multicolumn{1}{c|}{9.8 e-50} & \multicolumn{1}{c|}{1.0 e-24} & 6.4 e-65 \\ \hline
	Thm \ref{maintheorem}              & \multicolumn{1}{c|}{3 s}     & \multicolumn{1}{c|}{4 s}      & \multicolumn{1}{c|}{4 s}      & \multicolumn{1}{c|}{5 s}      & \multicolumn{1}{c|}{6 s}      & \multicolumn{1}{c|}{7 s}      & \multicolumn{1}{c|}{9 s}      & 8 s      \\ \hline
	Householder          & \multicolumn{1}{c|}{9 s}     & \multicolumn{1}{c|}{8 s}      & \multicolumn{1}{c|}{34 s}     & \multicolumn{1}{c|}{32 s}     & \multicolumn{1}{c|}{132 s}    & \multicolumn{1}{c|}{122 s}    & \multicolumn{1}{c|}{500 s}    & 442 s    \\ \hline
\end{tabular}

\medskip

\caption{Tables showing the performance of Algorithm \ref{RFA}  applied to the equation $H_{2}(x_{1})=y$ where $y$ runs from $0.1$ to $0.9$, and for various orders $5$, $10$, $15$, $20$. The initial guesses $x_{0}$ are obtained by solving $f=y$ or $g=y$, where $f=4x(1-x)$ and $g=(4x(1-x))^{\frac{1}{\ln 4}}$ are approximating functions of $H_{2}(x)$. The accuracy of $x_{1}$ after 1 iteration are studied based on $|H_{2}(x_{1})-y|$ and $|x_{1}-\alpha|$. Their running times of Algorithm \ref{RFA} and the Householder's method are shown in seconds.  }\label{H2table1}
\end{table}

\begin{figure}[h!]
	
	\centering
	\includegraphics[scale=0.42]{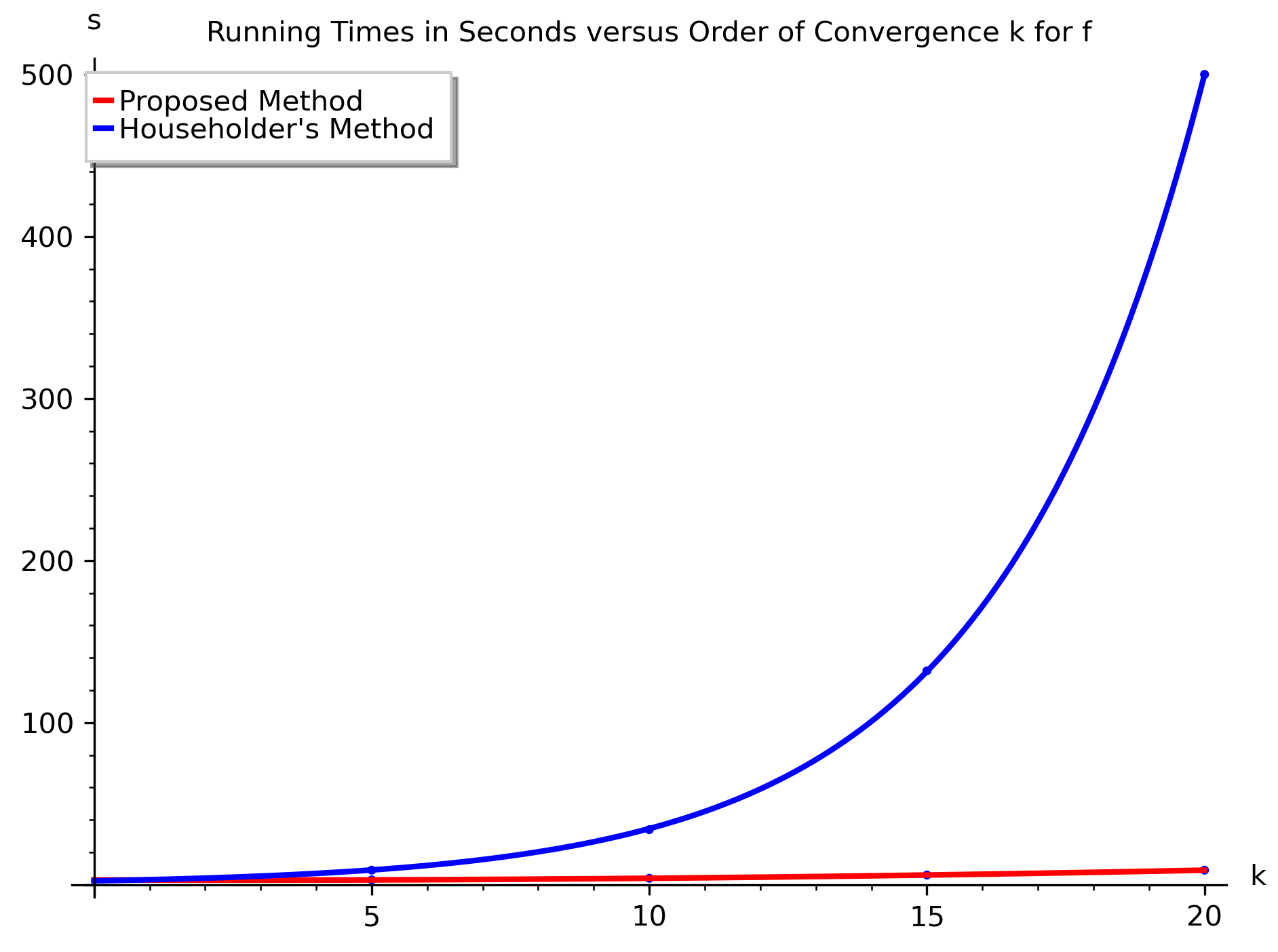}
	\includegraphics[scale=0.42]{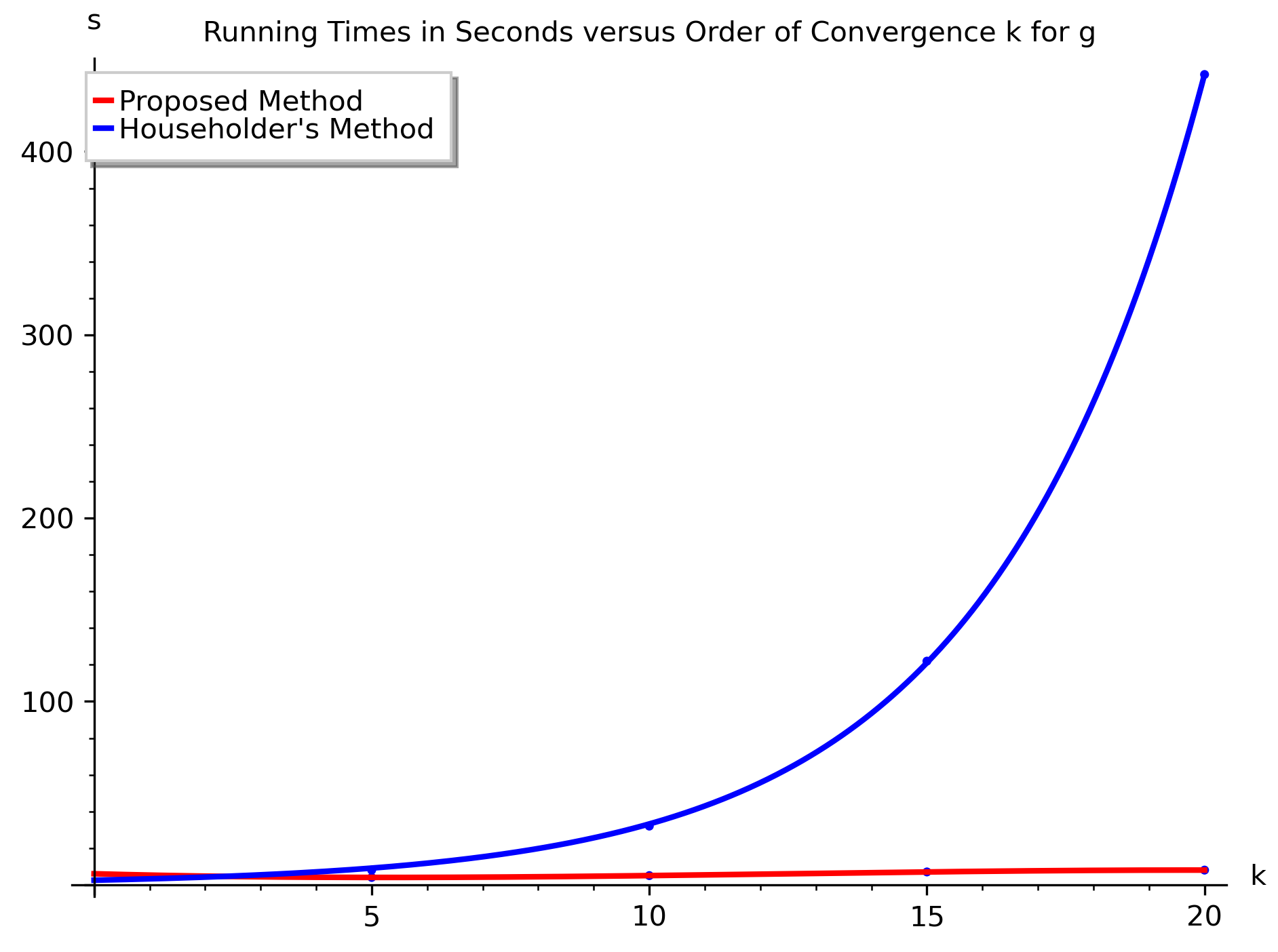}
	\caption{Running times based on Table \ref{H2table1} for $f=4x(1-x)$ and $g=(4x(1-x))^{\frac{1}{\ln(4)}}$.}\label{runningtimes}
\end{figure}

\subsection{Root-Finding methods on $H_{q}(x)=y$ for $q\geq 3$}

Most of the discussion above for the binary entropy function case can be applied to the general case of solving preimage of a $q$-ary entropy function $H_{q}(x)=y$. Indeed, one observes that $H_{q}$ may be written in terms of $H_{2}$:

\[H_{q}(x) = x\log_{q}(q-1)+\frac{H_{2}(x)}{\log_{2}q},\]
and thus any function $\varphi$ that approximates $H_{2}$ may be used to approximate $H_{q}$:
\[
\widetilde{\varphi}:=x\log_{q}(q-1)+\frac{\varphi(x)}{\log_{2}q}.
\]
The following proposition shows that if $\varphi$ approximates $H_{2}$ in $\sup$-norm, then the solutions to $\widetilde{\varphi}=y$ and $H_{q}=y$ will be close to each other. In fact, their solutions will even be closer to each other if the prime number $q$ is large. 
\medskip
\begin{proposition}\label{prop401}
	Let $\varphi$ be a function on the interval $[0.0.5]$ such that $\varphi(0)=0$, $\varphi(0.5)=1$, and 
	\[
	\sup_{x\in [a,b]}|\varphi-H_{2}|\leq \varepsilon
	\]
	for some small $\varepsilon$. Let $\alpha$ and $\beta$ be such that 
	\begin{equation}\label{propHq}
		\alpha\log_{q}(q-1)+\frac{\varphi(\alpha)}{\log_{2}q}=y=H_{q}(\beta).
	\end{equation}
	Then 
	\begin{equation}\label{propbound}
	|\alpha-\beta|\leq \frac{\varepsilon}{\log_{2}(q-1)}.
	\end{equation}
\end{proposition}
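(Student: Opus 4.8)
The plan is to reduce everything to a single application of the mean value theorem for $H_q$, exactly in the spirit of equation \eqref{x-alpha}, after first converting the sup-norm closeness of $\varphi$ to $H_2$ into closeness of the two values $H_q(\alpha)$ and $H_q(\beta)$. Using the decomposition stated just before the proposition, $H_q(x) = x\log_q(q-1) + \frac{H_2(x)}{\log_2 q}$, equation \eqref{propHq} says precisely that $H_q(\beta) = y = \alpha\log_q(q-1) + \frac{\varphi(\alpha)}{\log_2 q}$. Subtracting $H_q(\alpha) = \alpha\log_q(q-1) + \frac{H_2(\alpha)}{\log_2 q}$ from this kills the linear term and leaves
\[
H_q(\beta) - H_q(\alpha) = \frac{\varphi(\alpha) - H_2(\alpha)}{\log_2 q},
\]
so that $|H_q(\beta) - H_q(\alpha)| \le \frac{\varepsilon}{\log_2 q}$ by the hypothesis $\sup|\varphi - H_2| \le \varepsilon$.

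Next I would apply the mean value theorem to $H_q$ on the interval between $\alpha$ and $\beta$: there is $\xi$ strictly between them with $H_q(\beta) - H_q(\alpha) = H_q'(\xi)(\beta - \alpha)$, hence
\[
|\alpha - \beta| = \frac{|H_q(\beta) - H_q(\alpha)|}{|H_q'(\xi)|} \le \frac{\varepsilon}{\log_2 q \, \cdot |H_q'(\xi)|}.
\]
The remaining task is a lower bound for $|H_q'|$ on the relevant interval. A direct differentiation of Definition \ref{Hq} gives $H_q'(x) = \log_q\!\left(\frac{(q-1)(1-x)}{x}\right)$; since $\frac{(q-1)(1-x)}{x}$ is strictly decreasing in $x$, so is $H_q'$, and therefore on $[0,\tfrac12]$ one has $H_q'(x) \ge H_q'(\tfrac12) = \log_q(q-1) > 0$ (using $q\ge 3$). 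Plugging this in and simplifying with the change-of-base identity $\log_2 q \cdot \log_q(q-1) = \frac{\ln q}{\ln 2}\cdot\frac{\ln(q-1)}{\ln q} = \log_2(q-1)$ yields exactly the bound \eqref{propbound}.

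The only genuine point requiring care — and the closest thing to an obstacle — is making sure the derivative estimate is applied on the correct domain: the clean lower bound $|H_q'| \ge \log_q(q-1)$ holds on $[0,\tfrac12]$, whereas $H_q'$ decays to $0$ at the right endpoint $x = \frac{q-1}{q}$ of the full Gilbert–Varshamov range. So the argument should be run under the standing assumption that $\alpha,\beta$ (and hence $\xi$) lie in $[0,\tfrac12]$, consistent with the setup of Proposition \ref{prop401}; if one instead wanted $x$ up to $\frac{q-1}{q}$ the constant would have to be weakened. Everything else is routine: the subtraction step, the mean value theorem, and the logarithmic change of base.
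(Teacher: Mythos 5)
Your proof is correct, but it is not the route the paper takes. The paper's proof is purely algebraic: it subtracts the two expressions for $y$ in \eqref{propHq} to obtain $(\alpha-\beta)\log_{q}(q-1)=-\frac{\varphi(\alpha)-H_{2}(\beta)}{\log_{2}q}$, solves for $|\alpha-\beta|$, applies the change-of-base identity $\log_{q}(q-1)\log_{2}q=\log_{2}(q-1)$, and then bounds $|\varphi(\alpha)-H_{2}(\beta)|$ by $\varepsilon$ --- no mean value theorem, no derivative of $H_{q}$, and no restriction of $\alpha,\beta$ to $[0,\tfrac{1}{2}]$. You instead subtract $H_{q}(\alpha)$ so that the hypothesis is used at the single point $\alpha$, giving $|H_{q}(\beta)-H_{q}(\alpha)|\le\varepsilon/\log_{2}q$, and then convert a bound on values of $H_{q}$ into a bound on $|\alpha-\beta|$ via the mean value theorem together with the lower bound $H_{q}'\ge\log_{q}(q-1)$ on $[0,\tfrac{1}{2}]$. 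The trade-off is real: your argument needs the extra standing assumptions $\alpha,\beta\in[0,\tfrac{1}{2}]$ and $q\ge 3$ (which you correctly flag), but it only ever invokes $\sup_{x}|\varphi(x)-H_{2}(x)|\le\varepsilon$ at one point, whereas the paper's final inequality bounds $|\varphi(\alpha)-H_{2}(\beta)|$ --- a comparison of the two functions at two \emph{different} points --- by the same-point sup norm, a step that does not follow from the stated hypothesis without further argument (e.g.\ a continuity or monotonicity estimate relating $H_{2}(\alpha)$ to $H_{2}(\beta)$). In that respect your version is the more careful one; both routes produce the identical constant $\varepsilon/\log_{2}(q-1)$.
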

\begin{proof}
	Bringing the right hand side of the equation \eqref{propHq} over, and using the expression of $H_{q}$, we see that 
	\begin{align}
		\left(\alpha-\beta\right)\log_{q}(q-1)-\frac{\varphi(\alpha)-H_{2}(\beta)}{\log_{2}q}=0.
	\end{align}
	Therefore, we have 
	\begin{align}
		|\alpha-\beta| = \frac{|\varphi(\alpha)-H_{2}(\beta)|}{\log_{q}(q-1)\log_{2}q} =
		\frac{|\varphi(\alpha)-H_{2}(\beta)|}{\log_{2}(q-1)} \leq 
		\frac{\varepsilon}{\log_{2}(q-1)}.
	\end{align}
	This finishes the proof. 
\end{proof}

Recall that we have the inequality
$
f(x) \leq H_{2}(x) \leq g(x),
$
where $f(x) = 4x(1-x)$ and $g(x) = (4x(1-x))^{\frac{1}{\ln(4)}}$. We will thus be able to estimate $\varepsilon$ in the previous proposition. 

\begin{proposition}\label{prope}
	Let 
	\[\eta := \frac{1}{2}\left(1-\left(1-\ln(4)^{\frac{\ln(4)}{1-\ln(4)}}\right)^{1/2}\right),\] 
	and $\varepsilon := |f(\eta)-g(\eta)|\approx 0.1196$. 
	Let $\alpha$, $\beta$ and $\gamma$ be such that $f(\alpha)=H_{q}(\beta)=g(\gamma)$. Then 
	\begin{equation}
		\begin{aligned}
			|\alpha-\beta| &\leq \frac{\varepsilon}{\log_{2}(q-1)},\qquad
			|\gamma - \beta| & \leq \frac{\varepsilon}{\log_{2}(q-1)}.
		\end{aligned}
	\end{equation}
\end{proposition}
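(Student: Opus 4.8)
\textbf{Proof proposal for Proposition \ref{prope}.}
The plan is to deduce everything from Proposition \ref{prop401} by showing that each of the approximants $f(x)=4x(1-x)$ and $g(x)=(4x(1-x))^{1/\ln(4)}$ is within $\varepsilon$ of $H_{2}$ in the sup-norm on $[0,0.5]$, and that this optimal $\varepsilon$ is realized at $x=\eta$. First I would check that $f$ and $g$ meet the boundary hypotheses of Proposition \ref{prop401}: indeed $f(0)=g(0)=0$ and $f(0.5)=g(0.5)=1$. Then, from the Tops{\o}e bounds $f(x)\le H_{2}(x)\le g(x)$ on $[0,0.5]$ recalled above, I obtain the pointwise estimates $|f(x)-H_{2}(x)|=H_{2}(x)-f(x)\le g(x)-f(x)$ and $|g(x)-H_{2}(x)|=g(x)-H_{2}(x)\le g(x)-f(x)$, so it suffices to maximize $g-f$ over $[0,0.5]$.

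The key computation is a change of variable: put $u=f(x)=4x(1-x)$, which is a strictly increasing bijection $[0,0.5]\to[0,1]$, so that $g(x)-f(x)=u^{1/\ln(4)}-u=:\psi(u)$. Since $\ln(4)>1$ we have $0<1/\ln(4)<1$, hence $\psi>0$ on $(0,1)$ while $\psi(0)=\psi(1)=0$; the maximum is therefore attained at the unique interior critical point, where $\psi'(u)=\tfrac{1}{\ln(4)}u^{1/\ln(4)-1}-1=0$, i.e. $u=(\ln(4))^{\frac{\ln(4)}{1-\ln(4)}}$, which lies in $(0,1)$ because the exponent is negative and the base exceeds $1$. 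Inverting $u=4x(1-x)$ on the branch landing in $[0,0.5]$ gives $x=\tfrac12\bigl(1-\sqrt{1-u}\bigr)$, which is exactly $\eta$, and consequently $\sup_{x\in[0,0.5]}\bigl(g(x)-f(x)\bigr)=g(\eta)-f(\eta)=\varepsilon$.

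With these sup-norm bounds in hand, I would invoke Proposition \ref{prop401} twice with this same $\varepsilon$: once with $\varphi=f$ (so that $\widetilde{\varphi}(\alpha)=y=H_{q}(\beta)$) to get $|\alpha-\beta|\le \varepsilon/\log_{2}(q-1)$, and once with $\varphi=g$ to get $|\gamma-\beta|\le \varepsilon/\log_{2}(q-1)$. Finally the numerical estimate $\varepsilon=|f(\eta)-g(\eta)|\approx 0.1196$ follows by a routine evaluation of $\psi$ at $u=(\ln(4))^{\frac{\ln(4)}{1-\ln(4)}}$.

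The only mildly delicate point is the reduction step: one must justify that maximizing $g-f$ in $x$ is the same as maximizing $\psi$ in $u$ (which is immediate from the monotonicity of $x\mapsto 4x(1-x)$ on $[0,0.5]$) and that the stationary point of $\psi$ is a genuine interior maximum rather than a minimum or a boundary extremum (handled by the sign of $\psi$ on $(0,1)$ together with $\psi(0)=\psi(1)=0$). Everything else is a direct application of the already-established Proposition \ref{prop401}, so I do not anticipate any real obstacle.
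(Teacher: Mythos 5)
Your proposal is correct and follows essentially the same route as the paper: bound $\sup|f-H_2|$ and $\sup|g-H_2|$ by $\max(g-f)$ via the Tops{\o}e sandwich, locate the maximum of $g-f$ at $x=\eta$ by a critical-point computation, and apply Proposition \ref{prop401} with $\varphi=f$ and $\varphi=g$. Your substitution $u=4x(1-x)$ and the explicit verification that the stationary point is an interior maximum merely fill in details the paper's one-line proof leaves implicit.
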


\begin{proof}
	It suffices to find the maximum of the function 
	\[
	g-f = (4x(1-x))^{\frac{1}{\ln(4)}}-4x(1-x).
	\]
	Differentiating and solving $g'-f'=0$ for $x$, we see that the maximum occurs at $x=\eta$. The previous proposition concludes the proof. 
\end{proof}

Proposition \ref{prope} shows that the first guess  of the root can be obtained by either solving $\tilde{f}(x)=y$ or $\tilde{g}(x)=y$ for $x$. In addition, the bound \eqref{propbound} goes to zero whenever $q$ is large, which implies that the first guess becomes closer to the actual solution. However, $\tilde{f}(x)=y$ is a quadratic equation in $x$, which may be preferred to $\tilde{g}$ which may not have closed-form solutions.

\subsubsection{For large primes $q$}
When $q$ is large, the function $H_{q}(x)$ approximates to $x$. This allows us to find an alternative candidate to approximate the root of $H_{q}(x)=y$ without using non-linear functions. 
\medskip
\begin{proposition}\label{prop402}
Let $\varepsilon>0$ be given. If $q\geq 2^{\frac{2}{\varepsilon}}$, then for all $x\in [0,\frac{q-1}{q}]$,
\[
\left|H_{q}(x)-x\right|\leq \varepsilon.
\]
\end{proposition}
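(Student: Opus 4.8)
The plan is to collapse the whole estimate into one clean inequality by splitting $H_q(x)-x$ into two nonnegative pieces and controlling each by the same quantity $1/\log_2 q$. First I would use the identity already recorded earlier in the paper, $H_q(x)=x\log_q(q-1)+\frac{H_2(x)}{\log_2 q}$, together with $x\log_q(q-1)-x=x\bigl(\log_q(q-1)-1\bigr)=-x\log_q\frac{q}{q-1}$, to write
\[
H_q(x)-x \;=\; -\,x\log_q\!\frac{q}{q-1}\;+\;\frac{H_2(x)}{\log_2 q}.
\]
This displays $H_q(x)-x$ as a difference of two nonnegative terms (for $x\in[0,\tfrac{q-1}{q}]$ and $q\ge 2$), which is the structural observation that makes the bound immediate.

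Next I would bound each term separately. For the second term, the standard fact $0\le H_2(x)\le 1$ on $[0,1]$ gives $0\le \frac{H_2(x)}{\log_2 q}\le \frac{1}{\log_2 q}$. For the first term, since $0\le x\le \frac{q-1}{q}<1$ and $\log_q\frac{q}{q-1}>0$, we have $0\le x\log_q\frac{q}{q-1}\le \log_q\frac{q}{q-1}$; and because $\frac{q}{q-1}\le 2$ for every $q\ge 2$, this is at most $\log_q 2=\frac{1}{\log_2 q}$. As the two terms carry opposite signs, combining them yields
\[
-\frac{1}{\log_2 q}\;\le\; H_q(x)-x\;\le\;\frac{1}{\log_2 q},
\qquad\text{hence}\qquad
|H_q(x)-x|\le \frac{1}{\log_2 q},
\]
uniformly in $x\in[0,\tfrac{q-1}{q}]$.

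Finally, I would invoke the hypothesis $q\ge 2^{2/\varepsilon}$, which is equivalent to $\log_2 q\ge 2/\varepsilon$, so that $\frac{1}{\log_2 q}\le \varepsilon/2\le\varepsilon$, completing the proof of Proposition~\ref{prop402}. There is no real obstacle here: the only point requiring a little care is ensuring both contributions are dominated by the \emph{same} quantity $1/\log_2 q$ (this is where $\frac{q}{q-1}\le 2$ enters) and observing that their opposite signs prevent any loss of a factor of $2$. The argument in fact produces the sharper bound $\varepsilon/2$; the extra slack could instead be spent on refining the first term via $\log\!\frac{q}{q-1}\le \frac{1}{q-1}$, but this is not needed for the stated claim.
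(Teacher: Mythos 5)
Your proof is correct and follows essentially the same route as the paper: both split $H_q(x)-x$ via the identity $H_q(x)=x\log_q(q-1)+\frac{H_2(x)}{\log_2 q}$ and bound the two resulting terms by $\frac{1}{\log_2 q}$ using $0\le H_2\le 1$ and $\frac{q}{q-1}\le 2$ (the paper phrases the latter as $|\log_2\frac{q-1}{q}|\le 1$). The only difference is cosmetic: the paper adds the two bounds by the triangle inequality to get exactly $\varepsilon$, whereas you exploit the opposite signs of the two terms to obtain the slightly sharper $\varepsilon/2$.
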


\begin{proof}
In fact, since $|H_{2}(x)|\leq 1$ for all $x$, given $q\geq 2^{\frac{2}{\varepsilon}}$, we have 
\[
\frac{H_{2}(x)}{\log_{2}q}\leq \frac{1}{\log_{2} q}\leq \frac{1}{2}\varepsilon.
\]
In addition, 
\[
\log_{q}(q-1)=\frac{\log_{2}(q-1)}{\log_{2} q}=1+\left(\frac{1}{\log_{2} q}\log_{2}\frac{q-1}{q}\right).
\]
Since $q\geq 2$, $|\log_{2}\frac{q-1}{q}|\leq 1$. Thus 
\begin{align*}
x-H_{q}(x) &= x(1-\log_{q}(q-1)) - \frac{H_{2}(x)}{\log_{2}(q)}
= -\frac{x}{\log_{2} q}\log_{2}\frac{q-1}{q}-\frac{H_{2}(x)}{\log_{2} q}.
\end{align*}
As $x$ is bounded above by $1$, it follows that 
$
|x-H_{q}(x)|\leq \frac{1}{2}\varepsilon + \frac{1}{2}\varepsilon = \varepsilon,
$
which finishes the proof. 
\end{proof}

The magnitude of prime $p$ to achieve an approximation with error less than $\varepsilon$ can be very huge. For instance, in order to obtain an error of less than $\varepsilon=0.001$, we would require $p>2^{2000}$.

Given such large primes, to solve for $H_{q}(x)=y$, we may let $x_{0}=y$ be the initial value, and apply the algorithm in Section \ref{section2} to obtain the solution. To provide a numerical illustration, we use one of the Mersenne primes $2^{127}-1$. According to Proposition \ref{prop402}, approximately we have $|H_{q}(x)-x|\leq 0.0157$ for all $x$. We perform 1 iteration for various values of $k$, with the  values $y$ running through from $0.1$ to $0.9$ at $0.1$-interval. Table \ref{Hq-x} shows the accuracy values and a comparison of running times  where we see that in this case Algorithm \ref{RFA} is significantly faster than Householder's method.

\begin{table}[t]
	\centering
\begin{tabular}{|c|cccc|}
\hline
\multirow{2}{*}{$y$} & \multicolumn{4}{c|}{$|H_{q}(x_{1})-y|$, $q=2^{127}-1$}                                                                                                                      \\ \cline{2-5} 
                     & \multicolumn{1}{c|}{$5$}                   & \multicolumn{1}{c|}{$10$}                  & \multicolumn{1}{c|}{$15$}                  & $20$                  \\ \hline
$0.1$                & \multicolumn{1}{c|}{$2.01\times 10^{-14}$} & \multicolumn{1}{c|}{$1.22\times 10^{-22}$} & \multicolumn{1}{c|}{$1.25\times 10^{-30}$} & $1.58\times 10^{-38}$ \\ \hline
$0.2$                & \multicolumn{1}{c|}{$8.70\times 10^{-15}$} & \multicolumn{1}{c|}{$1.48\times 10^{-23}$} & \multicolumn{1}{c|}{$4.22\times 10^{-32}$} & $1.50\times 10^{-40}$ \\ \hline
$0.3$                & \multicolumn{1}{c|}{$3.94\times 10^{-15}$} & \multicolumn{1}{c|}{$2.41\times 10^{-24}$} & \multicolumn{1}{c|}{$2.52\times 10^{-33}$} & $3.28\times 10^{-42}$ \\ \hline
$0.4$                & \multicolumn{1}{c|}{$1.91\times 10^{-15}$} & \multicolumn{1}{c|}{$4.03\times 10^{-25}$} & \multicolumn{1}{c|}{$1.70\times 10^{-34}$} & $8.63\times 10^{-44}$ \\ \hline
$0.5$                & \multicolumn{1}{c|}{$1.37\times 10^{-15}$} & \multicolumn{1}{c|}{$2.61\times 10^{-27}$} & \multicolumn{1}{c|}{$2.05\times 10^{-35}$} & $7.30\times 10^{-47}$ \\ \hline
$0.6$                & \multicolumn{1}{c|}{$2.05\times 10^{-15}$} & \multicolumn{1}{c|}{$4.53\times 10^{-25}$} & \multicolumn{1}{c|}{$1.96\times 10^{-34}$} & $1.04\times 10^{-43}$ \\ \hline
$0.7$                & \multicolumn{1}{c|}{$4.44\times 10^{-15}$} & \multicolumn{1}{c|}{$2.93\times 10^{-24}$} & \multicolumn{1}{c|}{$3.30\times 10^{-33}$} & $4.64\times 10^{-42}$ \\ \hline
$0.8$                & \multicolumn{1}{c|}{$1.03\times 10^{-14}$} & \multicolumn{1}{c|}{$1.98\times 10^{-23}$} & \multicolumn{1}{c|}{$6.41\times 10^{-32}$} & $2.59\times 10^{-40}$ \\ \hline
$0.9$                & \multicolumn{1}{c|}{$2.58\times 10^{-14}$} & \multicolumn{1}{c|}{$1.90\times 10^{-22}$} & \multicolumn{1}{c|}{$2.37\times 10^{-30}$} & $3.68\times 10^{-38}$ \\ \hline
\end{tabular}
\medskip

\begin{tabular}{|c|cccc|}
\hline
\multirow{2}{*}{$y$} & \multicolumn{4}{c|}{ $|x_{1}-\alpha|$ for $H_{2}(\alpha)=y$}                                                                                                                                                           \\ \cline{2-5} 
                     & \multicolumn{1}{c|}{$5$}                                                      & \multicolumn{1}{c|}{$10$}                  & \multicolumn{1}{c|}{$15$}                  & $20$                  \\ \hline
$0.1$                & \multicolumn{1}{c|}{$1.97\times 10^{-14}$}                                    & \multicolumn{1}{c|}{$1.97\times 10^{-22}$} & \multicolumn{1}{c|}{$1.22\times 10^{-30}$} & $1.55\times 10^{-38}$ \\ \hline
$0.2$                & \multicolumn{1}{c|}{$8.59\times 10^{-15}$}                                    & \multicolumn{1}{c|}{$1.46\times 10^{-23}$} & \multicolumn{1}{c|}{$4.16\times 10^{-32}$} & $1.48\times 10^{-40}$ \\ \hline
$0.3$                & \multicolumn{1}{c|}{$3.91\times 10^{-15}$}                                    & \multicolumn{1}{c|}{$2.40\times 10^{-24}$} & \multicolumn{1}{c|}{$2.50\times 10^{-33}$} & $3.26\times 10^{-42}$ \\ \hline
$0.4$                & \multicolumn{1}{c|}{$1.90\times 10^{-15}$}                                    & \multicolumn{1}{c|}{$4.02\times 10^{-25}$} & \multicolumn{1}{c|}{$1.68\times 10^{-34}$} & $8.60\times 10^{-44}$ \\ \hline
$0.5$                & \multicolumn{1}{c|}{$1.37\times 10^{-15}$}                                    & \multicolumn{1}{c|}{$2.61\times 10^{-27}$} & \multicolumn{1}{c|}{$2.05\times 10^{-35}$} & $7.30\times 10^{-47}$ \\ \hline
$0.6$                & \multicolumn{1}{c|}{$2.06\times 10^{-15}$}                                    & \multicolumn{1}{c|}{$4.54\times 10^{-25}$} & \multicolumn{1}{c|}{$1.96\times 10^{-34}$} & $1.04\times 10^{-43}$ \\ \hline
$0.7$                & \multicolumn{1}{c|}{$4.47\times 10^{-15}$}                                    & \multicolumn{1}{c|}{$2.96\times 10^{-24}$} & \multicolumn{1}{c|}{$3.33\times 10^{-33}$} & $4.68\times 10^{-42}$ \\ \hline
$0.8$                & \multicolumn{1}{c|}{$1.03\times 10^{-14}$} & \multicolumn{1}{c|}{$2.01\times 10^{-23}$} & \multicolumn{1}{c|}{$6.49\times 10^{-32}$} & $2.62\times 10^{-40}$ \\ \hline
$0.9$                & \multicolumn{1}{c|}{$2.62\times 10^{-14}$}                                    & \multicolumn{1}{c|}{$1.94\times 10^{-22}$} & \multicolumn{1}{c|}{$2.42\times 10^{-30}$} & $3.75\times 10^{-38}$ \\ \hline
Theorem  \ref{maintheorem}            & \multicolumn{1}{c|}{3 s}                                                    & \multicolumn{1}{c|}{4 s}                 & \multicolumn{1}{c|}{6 s}                 & 8 s                 \\ \hline
Householder's method & \multicolumn{1}{c|}{13 s}                                                   & \multicolumn{1}{c|}{63 s}           & \multicolumn{1}{c|}{194 s}          & 543 s           \\ \hline
\end{tabular}
\medskip
\caption{A table of difference between $x_{1}$ (value after 1 iteration) and the actual root $\alpha$ for $0.1\leq y\leq 0.9$, and for convergence orders $5$, $10$, $15$, $20$. Here we use the function $x$ to approximate $H_{q}(x)$. A comparison of running times in seconds between Theorem \ref{maintheorem} and Householder's method is also shown.}\label{Hq-x}
\end{table}

\section{Application 3: Basins of Attraction}

One of the things to study about root finding methods is their basins of attraction. Suppose we have an equation $f(x)=0$ whose solution set is discrete $\{\zeta_{i}:\ \zeta_{i}\in\mathbb{N}\}$. As the root finding method involves finding a starting point $x_{0}$, the iteration provides a sequence $\{x_{i}\}_{i=0}^{\infty}$ that either converges to one of the solutions $\zeta_{k}$, or it diverges. These starting points $x_{0}$ are then grouped together according to their corresponding limit values, and they form partitions on the domain of the function $f$. Having explained the idea behind the basins of attractions, we recall the following definition (cf. \cite{Epperson-2015, NRA-2020, OR-2000}).
\medskip
\begin{definition}
Given a function $f$ defined on the complex plane $\mathbb{C}$, with roots $\zeta_{1}$, $\zeta_{2}$, $\cdots$, $\zeta_{k}$, and a (convergent) root-finding iteration given by 
$
z_{n+1} = g(z_{n}),
$
the basin of attraction for the root $\zeta_{k}$ is defined to be 
\[
B_{f,g}(\zeta_{k})=
\left\{
\zeta \in\mathbb{C}:\ \text{the iteration }z_{n+1}=g(z_{n})\text{ with }z_{0}=\zeta\text{ converges to }\zeta_{k}\right\}.
\]
\end{definition}

We first examine the case over real functions, with focus on the real roots. We adapt the definition to the real case. 
\medskip
\begin{definition}
Let $U\subset\mathbb{R}$ be a subset and $f:U\rightarrow\mathbb{R}$ a continuous function with roots $\zeta_{1}$, $\cdots$, $\zeta_{k}$ in $U$. Given a (convergent) root-finding iteration  
$
x_{n+1}=g(x_{n}),
$
the basin of attraction for the root $\zeta_{k}$ is defined to be
\[
B_{f,g}^{\mathbb{R}}(\zeta_{k})=\left\{
\zeta \in\mathbb{R}:\ \text{the iteration }x_{n+1}=g(x_{n})\text{ with }x_{0}=\zeta\text{ converges to }\zeta_{k}\right\}.
\]
\end{definition}

Just as in the case of Householder's method, the root-finding method in Theorem \ref{maintheorem} can be applied directly to complex equations for complex solutions. As a result of Theorem \ref{Theorem-Householder}, the  following corollary is trivial
\medskip
\begin{corollary} \label{cor-BA}
	Let $B_{f,g}^{\text{H}}(\zeta_{k})$ be the basin of attraction for the root $\zeta_{k}$ using Householder's method, and let $B_{f,g}(\zeta_{k})$ be the basin of attraction for the root $\zeta_{k}$ using Theorem \ref{maintheorem}. Then $B_{f,g}^{\text{H}}(\zeta_{k})=B_{f,g}(\zeta_{k})$. The same conclusion holds for the real case. 
\end{corollary}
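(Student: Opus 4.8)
The plan is to observe that the two root-finding iterations are governed by \emph{literally the same} iteration function, so that the orbits issued from a common starting point coincide term by term, and hence their limiting behaviour---convergence to a given root or divergence---is identical.

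Concretely, write $g_{\text{H}}(z) = z + k\frac{(1/f)^{(k-1)}(z)}{(1/f)^{(k)}(z)}$ for the Householder iteration map and $g(z) = z - \frac{f(z)A_{k-1}(z)}{A_{k}(z)}$ for the map of Theorem \ref{maintheorem}. By Theorem \ref{Theorem-Householder}, $k\frac{(1/f)^{(k-1)}}{(1/f)^{(k)}} = -f\frac{A_{k-1}}{A_{k}}$, so $g_{\text{H}}(z) = g(z)$ at every point $z$ where both sides make sense. First I would pin down that the two maps have the same domain: by Lemma \ref{lem5}, $(1/f)^{(k)} = B_{k}/f^{k+1}$ with $B_{k} = (-1)^{k}k!\,A_{k}$, so away from the zeros of $f$ the vanishing locus of $(1/f)^{(k)}$ coincides with that of $A_{k}$; at the zeros of $f$ (the roots $\zeta_{i}$ themselves) both expressions extend continuously so that each $\zeta_{i}$ is a fixed point. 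Hence $g_{\text{H}} = g$ as meromorphic (respectively, continuous real) functions on a common set, and the exceptional set of points where neither iteration is defined is the same for both.

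Next, fix a root $\zeta_{k}$ and a starting value $z_{0} = \zeta$. A trivial induction on $n$, using $g_{\text{H}} = g$, shows that the Householder orbit $(z_{n}^{\text{H}})$ and the Theorem \ref{maintheorem} orbit $(z_{n})$ with this same $z_{0}$ satisfy $z_{n}^{\text{H}} = z_{n}$ for all $n$ (and one sequence is defined at step $n$ if and only if the other is). Consequently $z_{n}^{\text{H}} \to \zeta_{k}$ if and only if $z_{n} \to \zeta_{k}$, which is exactly $\zeta \in B_{f,g}^{\text{H}}(\zeta_{k}) \Longleftrightarrow \zeta \in B_{f,g}(\zeta_{k})$; since $\zeta$ was arbitrary, $B_{f,g}^{\text{H}}(\zeta_{k}) = B_{f,g}(\zeta_{k})$. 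Restricting every statement to $U \subset \mathbb{R}$ and to real roots gives the real case verbatim.

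The only genuine point requiring care---and the one I expect to be the main obstacle, modest as it is---is the bookkeeping about the exceptional points: the zeros of $A_{k}$ (equivalently of $(1/f)^{(k)}$), at which neither iteration is defined and which therefore lie in neither basin, and the roots of $f$, where one must check that the removable singularities of the two expressions for the iteration map are resolved in the same way. Both issues are settled by Lemma \ref{lem5}; beyond that the corollary is immediate, which is why the paper labels it trivial.
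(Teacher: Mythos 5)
Your proposal is correct and follows exactly the route the paper intends: the paper offers no written proof beyond declaring the corollary trivial from Theorem \ref{Theorem-Householder}, and your argument --- the two iteration maps coincide pointwise, so orbits from a common $z_{0}$ agree term by term and the basins are equal --- is precisely that observation, with the added (and welcome) care about the common exceptional set via Lemma \ref{lem5}.
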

\medskip

As an example to Corollary \ref{cor-BA}, we compute basins of attractions for the equation $z^{6}-z+1=0$ and compare them across various orders.  In general, we expect them to exhibit fractal behaviour with six large components corresponding to the degree of the polynomial. We see that in Figure \ref{BA-2D}, these components should then become clearer and more distinct with less fractal behaviour as the order of root-finding method increases. More importantly, both the 6-th order Householder's method and the proposed method in Theorem \ref{maintheorem} have the same basins of attraction.

While the examples deal with holomorphic polynomials, finding initial guesses for root-finding method to solve for an equation involving a holomorphic function may be less straightforward, since we are dealing with infinite series in general. Nonetheless, there is an analogue of the Stone-Weierstrass theorem for complex analytic setting that allows us to uniformly approximate holomorphic functions with holomorphic polynomials on some compact sets.

\begin{theorem}[Mergelyan Theorem]
	Let $K$ be a compact subset of $\mathbb{C}$ such that $\mathbb{C}-K$ is connected. Then every continuous function $f:K\rightarrow \mathbb{C}$ such that the restriction $f$ to $\text{int}(K)$ is holomorphic can be approximated uniformly on $K$ with holomorphic polynomials. 
\end{theorem}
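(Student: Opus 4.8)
The statement is the classical Mergelyan theorem, so the plan is to reproduce (a streamlined version of) its standard proof rather than to deduce it from the earlier results of this paper. \emph{Step 1: reduce to a smooth, almost-holomorphic model.} By the Tietze extension theorem, extend $f$ to a continuous function on $\mathbb{C}$ with compact support, and record its modulus of continuity $\omega(t)=\sup_{|z-w|\le t}|f(z)-f(w)|$. Convolving with a smooth bump $\varphi_\delta\ge 0$ of total mass one supported in the disc of radius $\delta$, set $f_\delta:=f*\varphi_\delta\in C^\infty_c(\mathbb{C})$. Then $\|f_\delta-f\|_{\infty}\le C\,\omega(\delta)$, and because $f$ is holomorphic on $\operatorname{int}(K)$, the function $\bar\partial f_\delta$ vanishes at every point whose $\delta$-neighbourhood lies inside $\operatorname{int}(K)$; hence $\operatorname{supp}\bar\partial f_\delta$ is contained in the $\delta$-collar $\{z:\operatorname{dist}(z,\partial K)\le\delta\}$, with the pointwise bound $|\bar\partial f_\delta|\le C\,\omega(\delta)/\delta$.

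\emph{Step 2: represent and approximate the Cauchy kernel.} The Cauchy--Pompeiu formula gives
\[
f_\delta(z)=-\frac{1}{\pi}\iint_{\mathbb{C}}\frac{\bar\partial f_\delta(\zeta)}{\zeta-z}\,dA(\zeta),
\]
so it suffices to approximate this integral, uniformly for $z\in K$, by a polynomial in $z$. Cover $\operatorname{supp}\bar\partial f_\delta$ by a grid of closed squares $\{Q_j\}$ of side $\sim\delta$. Each such square lies within $\sim\delta$ of $\partial K$, and since $\mathbb{C}\setminus K$ is connected, the (unique, unbounded) complementary component borders $Q_j$; this supplies, near each $Q_j$, a continuum $L_j$ of diameter $\gtrsim\delta$ disjoint from $K$. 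The technical heart of the proof --- \emph{Mergelyan's lemma} --- is that this geometric ``room'' allows one to construct a rational function $R_j(\zeta,z)$, with all poles in $\mathbb{C}\setminus K$, that approximates the Cauchy kernel $1/(\zeta-z)$ for $\zeta\in Q_j$, $z\in K$, with an error that stays bounded on the diagonal block $|\zeta-z|\lesssim\delta$ and decays like $\delta/|\zeta-z|^{2}$ off it. Substituting $R_j$ for $1/(\zeta-z)$ in the part of the integral coming from $Q_j$, summing over $j$, and estimating the accumulated error via the support and size bounds on $\bar\partial f_\delta$, yields a rational function $r_\delta$ with poles only in $\mathbb{C}\setminus K$ and $\|r_\delta-f\|_{\infty,K}\to 0$ as $\delta\to 0$.

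\emph{Step 3: from rational to polynomial.} Because $\mathbb{C}\setminus K$ is connected it has no bounded components, so Runge's theorem lets us approximate $r_\delta$ uniformly on $K$ by polynomials. Chaining this with the approximations above and letting $\delta\to 0$ produces a sequence of polynomials converging to $f$ uniformly on $K$, which is the assertion.

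\emph{Main obstacle.} Everything except Step 2 is soft and standard (Tietze, mollification, Cauchy--Pompeiu, Runge). The real work is Mergelyan's lemma and the subsequent error bookkeeping: one must use genuinely more than the connectedness of $\mathbb{C}\setminus K$ as a topological fact --- namely, that each boundary square abuts an outgoing continuum of definite size --- and then ``smear'' the single pole of $1/(\zeta-z)$ along that continuum, controlling the result by explicit estimates (classically via a substitution involving a square root, or, more flexibly, via Vitushkin's localisation operator together with estimates on analytic capacity). Making these estimates uniform over all squares, and verifying by a careful dyadic/geometric decomposition of the collar that the total error tends to zero, is the delicate core of the argument.
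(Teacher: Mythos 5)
The paper does not prove this statement at all: Mergelyan's theorem is quoted verbatim as a classical result from complex analysis, used only to motivate future work on approximating holomorphic functions by polynomials before applying the root-finding scheme. So there is no ``paper proof'' to compare against; your proposal has to stand on its own as a reproduction of the classical argument.

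Your outline is the standard one (Tietze extension, mollification, the Cauchy--Pompeiu representation of $f_\delta$ via $\bar\partial f_\delta$, Mergelyan's kernel-approximation lemma on a grid of boundary squares, then Runge to push the poles to infinity), and Steps 1 and 3 are fine. But the proposal is not a proof: the entire content of the theorem sits inside the lemma you defer in Step 2, namely the construction of $R_j(\zeta,z)$ with poles off $K$ and the uniform estimates that make the error sum converge. You acknowledge this, but there is also a concrete quantitative slip that would sink the bookkeeping even if the lemma were granted in the form you state it. You claim the off-diagonal error decays like $\delta/|\zeta-z|^{2}$. Feeding that into the error integral gives, for the contribution outside the diagonal block,
\[
\frac{\omega(\delta)}{\delta}\iint_{\delta\lesssim|\zeta-z|\lesssim R}\frac{\delta}{|\zeta-z|^{2}}\,dA(\zeta)\;\asymp\;\omega(\delta)\log(1/\delta),
\]
which does not tend to $0$ for a general modulus of continuity (take $\omega(\delta)\sim 1/\log(1/\delta)$). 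The classical lemma in fact delivers the stronger bound $C\,\delta^{2}/|\zeta-z|^{3}$ (together with $|R_j|\lesssim 1/\delta$ near the diagonal), and it is exactly this extra power that makes the sum over squares at distance $k\delta$ behave like $\sum_k k^{-2}$ rather than $\sum_k k^{-1}$ and hence total $O(\omega(\delta))$. So to complete the argument you must both prove the lemma (the square-root substitution smearing the pole along the continuum $L_j$, or Vitushkin's localisation) and state it with the correct cubic decay; as written, the error estimate you propose to ``chain'' in Step 3 does not close.
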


In the future work, we will study  the construction of such polynomials and the application of the proposed root-finding method to solve  the roots of analytic functions.

\begin{figure}[th!]
	\centering
\includegraphics[scale=0.4]{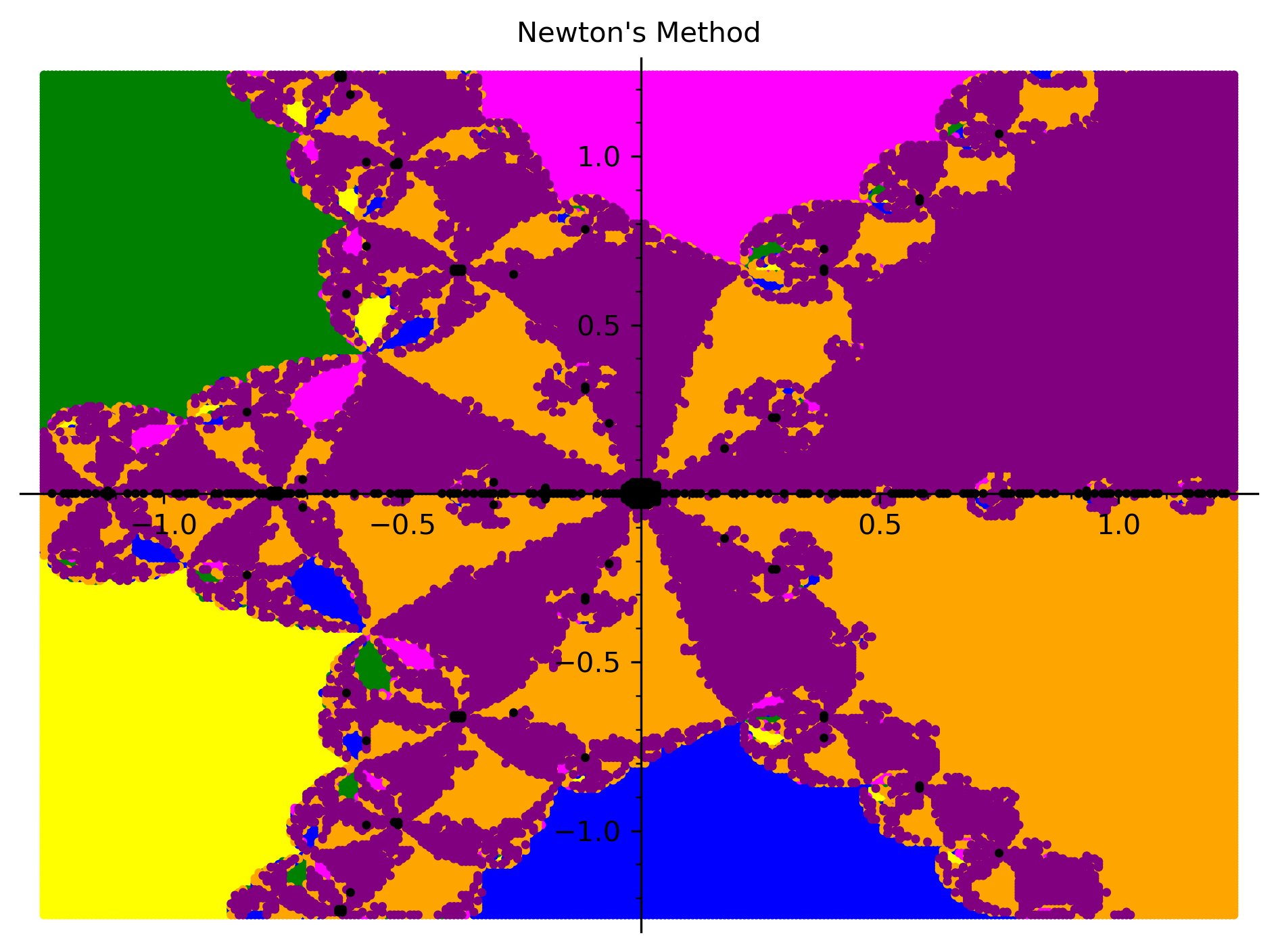}
\includegraphics[scale=0.4]{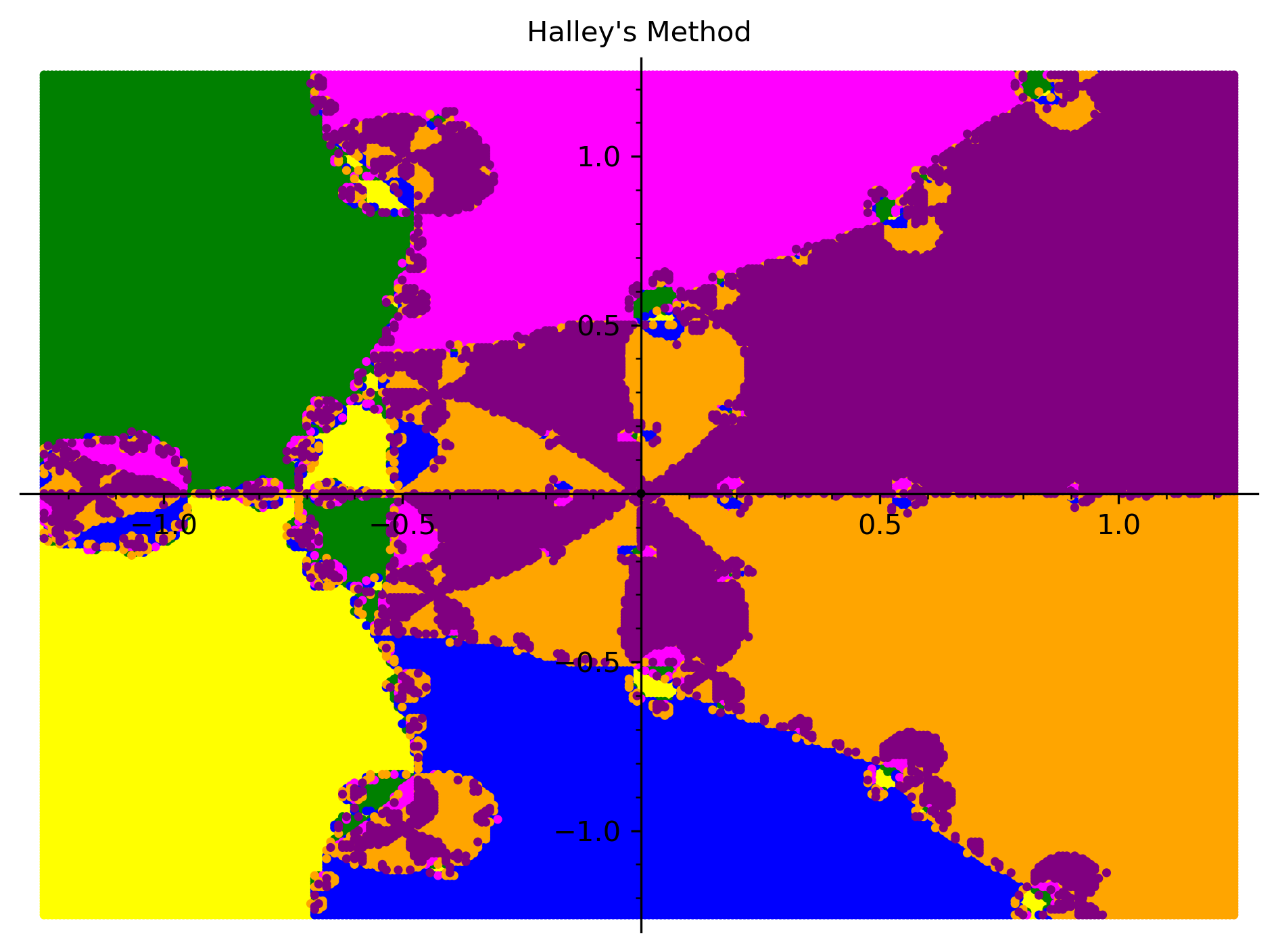}

\includegraphics[scale=0.4]{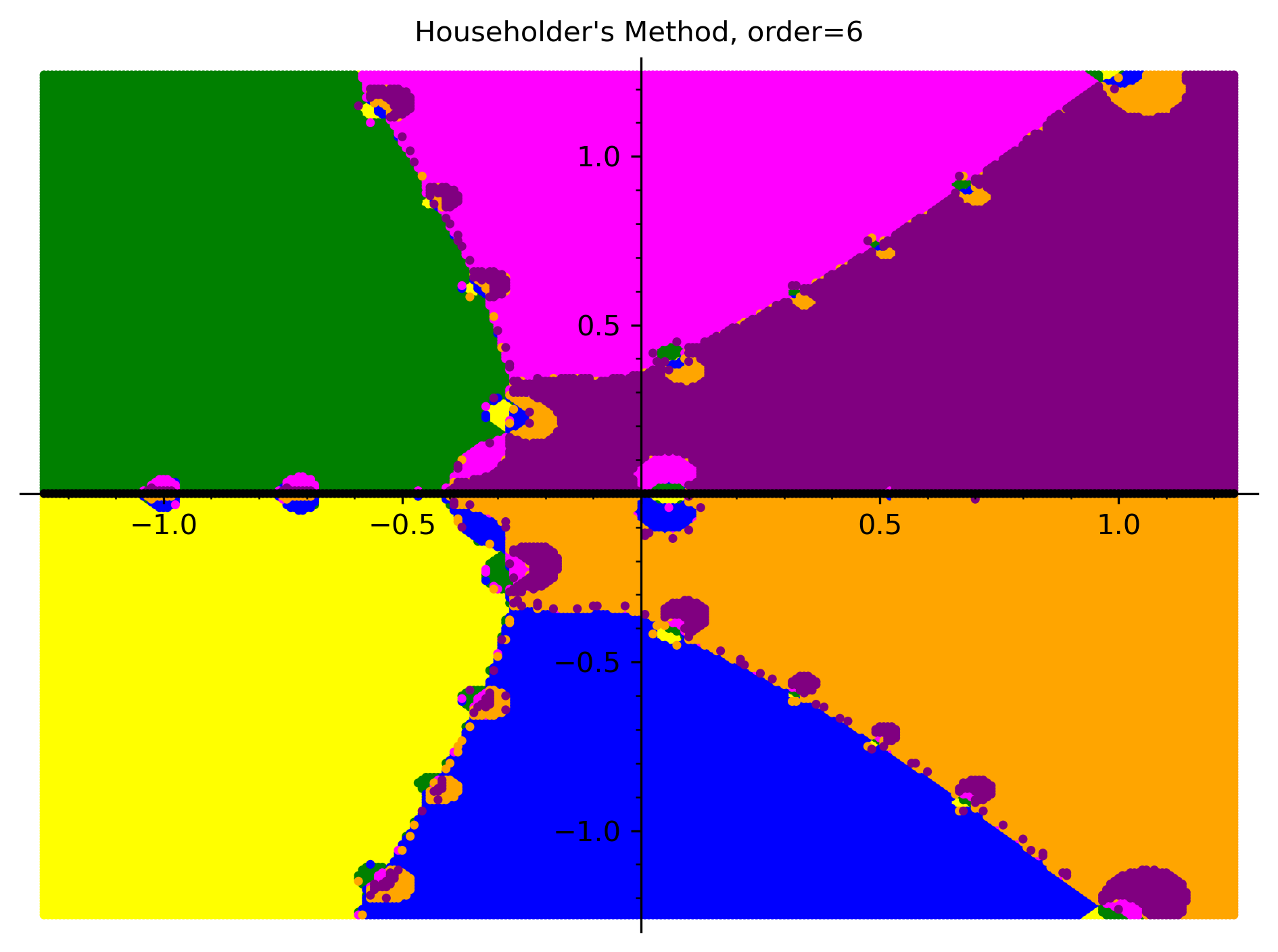}
\includegraphics[scale=0.4]{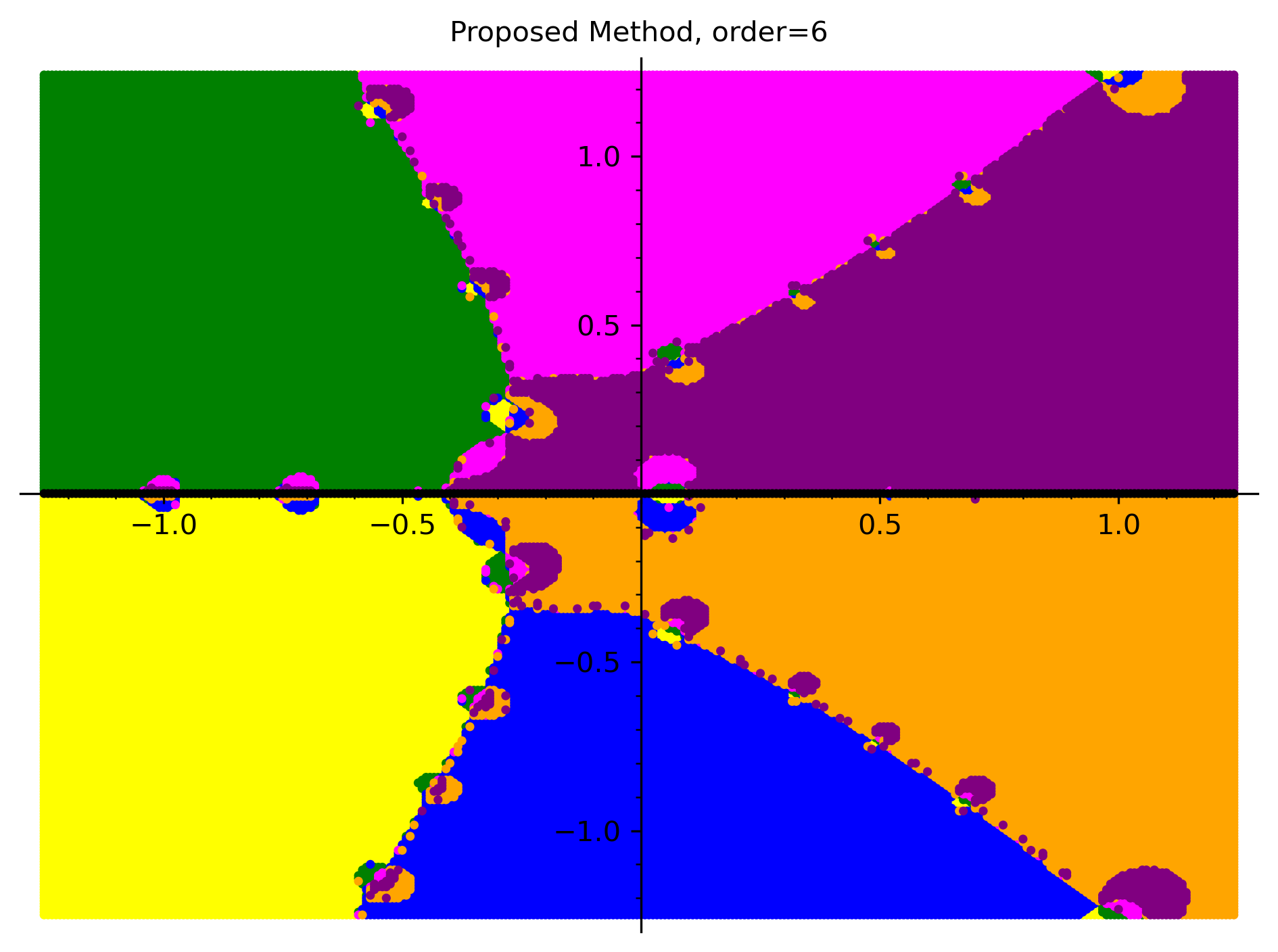}
\caption{Basins of attraction for various root finding methods for the equation $z^{6}-z+1=0$.}\label{BA-2D}
\end{figure}

\section{Conclusion}

We have proposed a root-finding scheme that allows finding numerical solutions to $f(x)=0$ at high order. This is possible because the scheme sets up a system of linear equations and performs eliminations of coefficients similar to the Gaussian elimination method. As an application, we show that Theorem \ref{maintheorem} can be used to numerically compute Householder's method, with a more precise computation of the convergence factor $C$. We apply Theorem \ref{maintheorem} to find the inverses of the $q$-ary entropy function, and we show that it can run much faster than Householder's method. Finally, we show that both Theorem \ref{maintheorem} and Householder's method have the same basins of attraction. 

One of the main advantages of Theorem \ref{maintheorem} over the Householder's method is that it relies much less on symbolic calculations. While Householder's method requires taking higher order derivatives of the reciprocal of a function, Algorithm \ref{RFA} relies only on the Taylor series expansion of a function at the $n$-th iteration $x_{n}$. As a result, we can apply Theorem \ref{maintheorem} to compute basins of attraction even for non-polynomial equations, with higher-order root-finding method. Indeed Figure \ref{BA1d} shows the basins of attraction of the equation $H_{2}(x)-0.3=0$ for order $50$ and for 3000 points between $0\leq x\leq 1$ after 1 iteration. On the other hand, Householder's method takes a very long time to run. Our future works can be applied to the study of dynamical behaviour of the root-finding methods for non-algebraic equations.
 
\begin{figure}
\center \includegraphics[scale=0.50]{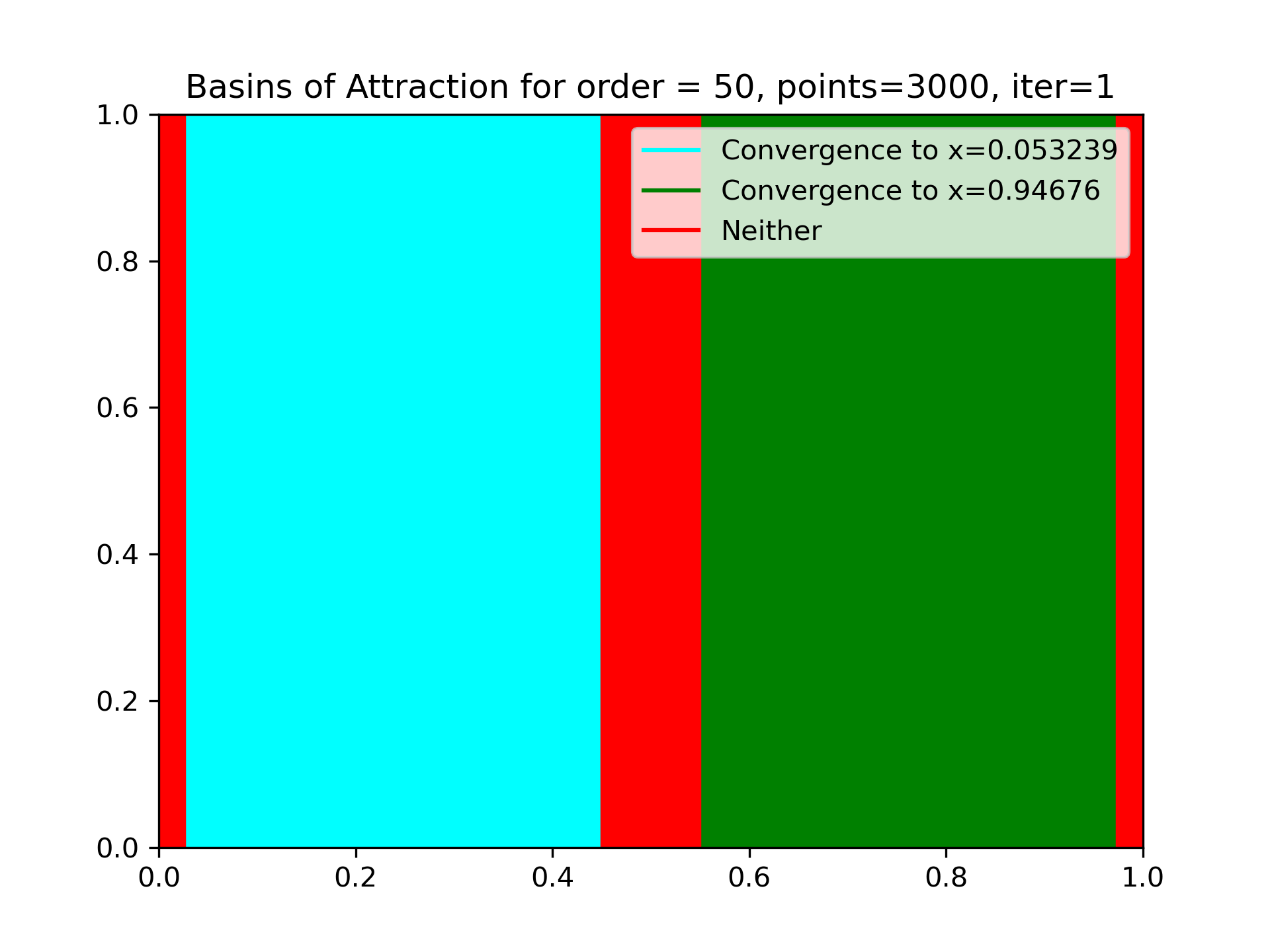}
\caption{Basins of Attraction for 50-th order root-finding method for 3000 points after 1 iteration for the equation $H_{2}(x)=0.3$ using Theorem \ref{maintheorem}.}\label{BA1d}
\end{figure}

\section{Disclosure statement}
The authors report there are no competing interests to declare.

\bibliographystyle{plain} % We choose the "plain" reference style
\bibliography{bibref} % Entries are in the refs.bib file
\end{document}